\newtheorem{thm}{Theorem}
\newtheorem{lemma}{Lemma}
\newtheorem{pro}{Proposition}
\newtheorem{rk}{Remark}
\newtheorem{cor}{Corollary}
\numberwithin{equation}{section} \setcounter{tocdepth}{1}
\begin{document}
\title[$q$-state $p$-adic Potts model]{
On $q$-state $p$-adic Potts model on a Cayley tree}

\author{O. N. Khakimov}

\address{O.N. Khakimov\\ Institute of mathematics,
29, Do'rmon Yo'li str., 100125, Tashkent, Uzbekistan.}
\email {hakimovo@mail.ru}

\begin{abstract} This work is devoted to description of all
translation-invariant
$p$-adic Gibbs measures (TIpGMs) for the $q$-state Potts
model on a Cayley tree
of order $k$. In particular, for the Cayley tree
of order three we give exact number of such measures.
Moreover we give criterion of boundedness of TIpGMs
\end{abstract}
\maketitle

{\bf{Key words.}} $p$-adic number, $p$-adic Potts model, Cayley tree,
$p$-adic Gibbs measure.

\section{introduction}

The $p$-adic numbers were first introduced by the German
mathematician K.Hensel. They form an integral part of number
theory, algebraic geometry,
representation theory and other branches of modern mathematics.
However, numerous applications of these numbers to
theoretical physics have been proposed \cite{14}
to quantum mechanics and to $p$-adic valued physical
observable \cite{k91}. A number of $p$-adic models in physics
cannot be described using ordinary probability theory based on the
Kolmogorov axioms.

In \cite{33} a theory of stochastic processes with values in
$p$-adic and more general non-Archimedean fields was developed,
having probability distributions with non-Archimedean values.

One of the basic branches of mathematics lying at the base of the
theory of statistical mechanics is the theory of probability and
stochastic processes. Since the theories of probability and
stochastic processes in a non-Archimedean setting have been
introduced, it is natural to study problems of statistical
mechanics in the context of the $p$-adic theory of probability.

We note that $p$-adic Gibbs measures were studied for several
$p$-adic
models of statistical mechanics \cite{GRR,16,TMP2,KMR},
\cite{MRasos,37,MNGM,Fg,MFHA,RK,RKh,bookroz}.

The Potts model is one of the most studied models in statistical
mechanics, since this model is related to a number of outstanding
problems in statistical and mathematical physics, and graph theory
\cite{GR}. Originally, the Potts model was introduced as a
generalization of the Ising model to more than two spin values. It
is known that \cite{MRasos} there exist phase transition for the
$q$-state $p$-adic Potts model on the Cayley tree of order $k$ if
and only if $q\in p\mathbb N$. In \cite{RKh} it was fully
investigated translation-invariant $p$-adic Gibbs
 measures for the $q$-state Potts model on a Cayley tree of order two.
 In this paper, for some conditions to $k$ and prime number $p$,
we shall fully describe the set of
TIpGMs for the $q$-state Potts model on a Cayley tree of order $k$.
Moreover, we shall show that
there occurs a phase transition for the $p$-adic Potts model on a
Cayley tree of order three.

\section{definitions and preliminary results}

\subsection{\bf $p$-adic numbers and measures.}

Let $\mathbb Q_p$ be the field of $p$-adic numbers with norm $|\cdot|_p$.
Recall that $\mathbb Q_p$ is a non-Archimedean (ultra-metric) space,
i.e. its norm satisfies the strong triangle inequality.
We often use the following property ($p$-adic norm's property):
$$|x\pm y|_p=\max\{|x|_p, |y|_p\},\qquad\mbox{if }\ |x|_p\neq|y|_p.$$
It is known \cite{29,48} that any $p$-adic number $x\ne 0$
can be uniquely represented
in the canonical form
\begin{equation}\label{ek}
x = p^{\gamma(x)}(x_0+x_1p+x_2p^2+\dots),
\end{equation}
where $\gamma=\gamma(x)\in \mathbb Z$ and the integers $x_j$
satisfy: $x_0 > 0$,
$0\leq x_j \leq p - 1$.

We recall that $Z_p=\{x\in\mathbb Q_p:\ |x|_p\leq1\}$ and
$Z_p^*=\{x\in\mathbb Q_p:\ |x|_p=1\}$
are the set of all $p$-adic integers
and units of $\mathbb Q_p$, respectively.
A number $a\in\mathbb Q_p\setminus\{0\}$ is called a $k$-residue modulo
$p$ if
there exists a number $x\in\mathbb Q_p$ such that
$x^k\equiv a(\operatorname{mod} p)$.
Otherwise, $a$ is called a $k$-nonresidue modulo $p$.

\begin{thm}\label{tx3}\cite{48}
The equation
$x^2 = a$, $0\ne a =p^{\gamma(a)}(a_0 + a_1p + ...), 0\leq a_j
\leq p - 1$, $a_0 > 0$ has a solution $x\in \mathbb Q_p$ iff
hold true the following:\\
$(a)$ $\gamma(a)$ is even;\\
$(b)$ $y^2=a_0(\operatorname{mod} p)$ is solvable for $p\ne 2$;
the equality $a_1=a_2=0$ holds if $p=2$.
\end{thm}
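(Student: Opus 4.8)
The plan is to reduce the equation to the case where $a$ is a unit and then apply Hensel's lemma, treating $p\neq2$ and $p=2$ separately since the derivative of $f(x)=x^2-a$ behaves very differently in the two cases. First I would establish part $(a)$ together with the reduction. Writing a putative solution in canonical form $x=p^{\gamma(x)}(x_0+x_1p+\cdots)$ and squaring gives $\gamma(x^2)=2\gamma(x)$, so matching the valuation of $a$ forces $\gamma(a)=2\gamma(x)$ to be even; this proves necessity of $(a)$. Conversely, if $\gamma(a)=2m$ is even, then $a=p^{2m}u$ with $u=a_0+a_1p+\cdots$ a unit, and the substitution $x=p^m y$ turns $x^2=a$ into the unit equation $y^2=u$. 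Hence it suffices to characterize when a $p$-adic unit is a square, and the full statement follows by unwinding this substitution.

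For $p\neq2$ I would apply Hensel's lemma to $f(y)=y^2-u$. Any solution $y$ is itself a unit, so reducing modulo $p$ shows $a_0\equiv y_0^2\pmod p$, i.e. $a_0$ is a quadratic residue, giving necessity of $(b)$. For sufficiency, if $y_0^2\equiv a_0\pmod p$ is solvable then $f(y_0)\equiv 0\pmod p$ while $f'(y_0)=2y_0$ is a unit, since here both $p\neq2$ and $a_0\not\equiv0\pmod p$ are used; the standard Hensel's lemma then lifts $y_0$ to a genuine root in $\mathbb{Z}_p$.

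The main obstacle, and the only delicate point, is $p=2$, where $f'(y)=2y$ is never a unit and the naive Hensel's lemma fails. Here $0<a_0\le p-1=1$ forces $a_0=1$, so $u=1+a_1\cdot2+a_2\cdot4+\cdots$, and I would prove that $u$ is a square precisely when $u\equiv1\pmod 8$, which is exactly the condition $a_1=a_2=0$. For necessity, writing a unit square as $(1+2b)^2=1+4b(b+1)$ and noting that $b(b+1)$ is even in $\mathbb{Z}_2$ yields $u\equiv1\pmod8$. For sufficiency I would invoke the refined Hensel's lemma: taking $y_0=1$ gives $|f(y_0)|_2=|1-u|_2\le 1/8<1/4=|f'(y_0)|_2^2$, the strong Hensel inequality that guarantees convergence of the Newton iteration to a root. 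Tracking the equivalence $u\equiv1\pmod8\Leftrightarrow a_1=a_2=0$ then completes part $(b)$ in the case $p=2$ and finishes the proof.
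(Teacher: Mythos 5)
The paper itself offers no proof of this theorem: it is quoted as a known result from the reference of Vladimirov--Volovich--Zelenov, so there is nothing internal to compare your argument against. Your proposal is correct and is the standard proof of that classical result. The valuation argument for part $(a)$ and the reduction $x=p^m y$ to the unit equation $y^2=u$ are sound; for $p\neq2$ the Hensel lifting works because $f'(y_0)=2y_0$ is a unit exactly under the hypotheses you point out; and for $p=2$ your two directions (the identity $(1+2b)^2=1+4b(b+1)$ with $b(b+1)$ even, and the strong Hensel inequality $|1-u|_2\le 1/8<1/4=|f'(1)|_2^2$) are both valid. It is worth noting that your ``refined Hensel'' step is precisely the version of Hensel's lemma the paper does state: with $f(y)=y^2-u$, $y_0=1$ and $i=1$ one has $f(1)\equiv0\ (\operatorname{mod}2^3)$, $f'(1)\equiv0\ (\operatorname{mod}2)$ and $f'(1)\not\equiv0\ (\operatorname{mod}4)$, so the quadratic case at $p=2$ follows from the lemma as printed, with no need to appeal to Newton iteration separately. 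The only cosmetic remark is that after the reduction to $y^2=u$ one should observe (as is immediate) that any solution is automatically a unit, hence lies in $\mathbb Z_p$, so the modular conditions make sense; you implicitly use this and it causes no gap.
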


\begin{thm}\label{txq}\cite{COR}
Let $k>2$ is a integer number and $(k,p)=1$. The equation
$x^k=a$ with $a\neq0$ has a solution
$x\in\mathbb Q_p$ if and only if\\
$(a)$ $k$ divides $\gamma(a)$, and,\\
$(b)$ $a_0$ is a $k$-residue modulo $p$.
\end{thm}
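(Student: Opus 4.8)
The plan is to reduce the general equation $x^k=a$ to an equation among $p$-adic units and then invoke Hensel's lemma. Writing $x=p^{\gamma(x)}u$ with $|u|_p=1$ and $a=p^{\gamma(a)}v$, where $v=a_0+a_1p+\cdots$ is a unit, the equation $x^k=a$ becomes $p^{k\gamma(x)}u^k=p^{\gamma(a)}v$. Comparing $p$-adic valuations forces $k\gamma(x)=\gamma(a)$, so a solution can exist only when $k$ divides $\gamma(a)$; this establishes the necessity of condition $(a)$. Conversely, assuming $k\mid\gamma(a)$ and setting $m=\gamma(a)/k$, the problem reduces to solving the unit equation $u^k=v$ with $u\in\mathbb{Z}_p^*$, since any such $u$ yields $x=p^mu$.

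For the unit equation I would apply Hensel's lemma to $f(X)=X^k-v$. First, reducing modulo $p$, any solution of $u^k=v$ in $\mathbb{Z}_p^*$ gives $u_0^k\equiv a_0\pmod p$; hence solvability forces $a_0$ to be a $k$-residue modulo $p$, which is the necessity of $(b)$. For sufficiency, suppose $u_0^k\equiv a_0\pmod p$ for some $u_0\not\equiv 0$. Then $f(u_0)\equiv 0\pmod p$, while $f'(u_0)=ku_0^{k-1}$. Here the hypothesis $(k,p)=1$ is decisive: it guarantees $|k|_p=1$, and since $u_0$ is a unit we get $|f'(u_0)|_p=1\neq 0$. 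Thus $u_0$ is a simple root modulo $p$, and Hensel's lemma lifts it to a genuine root $u\in\mathbb{Z}_p$ of $f$, which is automatically a unit because $v$ is.

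The main obstacle is verifying the non-degeneracy hypothesis of Hensel's lemma, namely that $f'(u_0)$ is a unit; this is exactly where the coprimality assumption $(k,p)=1$ enters and cannot be dropped. Equivalently, one may organize the argument through the structure of the unit group $\mathbb{Z}_p^*\cong\mu_{p-1}\times(1+p\mathbb{Z}_p)$: since $k$ is invertible in $\mathbb{Z}_p$, raising to the $k$-th power is an automorphism of the principal units $1+p\mathbb{Z}_p$, so a unit is a $k$-th power precisely when its Teichm\"uller (that is, its mod-$p$) component is, which recovers condition $(b)$. Combining the necessity and sufficiency of $(a)$ and $(b)$ completes the proof.
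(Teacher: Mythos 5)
The paper offers no proof of this statement to compare against: Theorem~\ref{txq} is quoted as a known solvability criterion from the reference \cite{COR} and is used purely as a black box in the preliminaries (later invoked in the proof of Theorem~\ref{tti} to extract $k$-th roots of elements of $\mathcal E_p$). So your argument must be judged on its own merits, and it is correct. The valuation comparison $k\gamma(x)=\gamma(a)$ gives the necessity of $(a)$ and reduces everything to the unit equation $u^k=v$; reduction modulo $p$ gives the necessity of $(b)$; and for sufficiency you apply Hensel's lemma (the paper's stated version with $i=0$) to $f(X)=X^k-v$, where the key point $|f'(u_0)|_p=|k u_0^{k-1}|_p=1$ holds exactly because $(k,p)=1$ and $u_0\not\equiv0\pmod p$ (the latter since $a_0\neq0$). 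Two minor remarks. First, the hypothesis $k>2$ plays no role in your proof; it is there only to demarcate this statement from Theorem~\ref{tx3}, which treats $k=2$ (where $p=2$ needs the extra condition $a_1=a_2=0$). Second, your group-theoretic aside via $\mathbb{Z}_p^*\cong\mu_{p-1}\times(1+p\mathbb{Z}_p)$ is stated for odd $p$; for $p=2$ one has instead $\mathbb{Z}_2^*\cong\{\pm1\}\times(1+4\mathbb{Z}_2)$, but since $(k,2)=1$ forces $k$ odd, raising to the $k$-th power is still an automorphism of $\mathbb{Z}_2^*$ and condition $(b)$ is vacuous there because $a_0=1$; your main Hensel argument covers $p=2$ without modification, so this imprecision does not affect the proof.
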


\begin{thm}\label{tx2}\cite{LOR}
Let $k=mp$ and $(m,p)=1$. The equation $x^k=a$ with $a\neq0$
has a solution
$x\in\mathbb Q_p$ if and only if\\
$(a)$ $k$ divides $\gamma(a)$, and,\\
$(b)$ $a_0$ is a $m$-residue modulo $p$ and
$a_0^p\equiv a_0+a_1p(\operatorname{mod} p^2)$.
\end{thm}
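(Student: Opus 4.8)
The plan is to separate the problem into a statement about the $p$-adic valuation and a statement about the unit part, and then to exploit the coprimality $(m,p)=1$ to split the unit part into an ``$m$-th power'' problem and a ``$p$-th power'' problem. First I would dispose of (a): if $x^{k}=a$ then writing $x=p^{\gamma(x)}u$ with $u$ a unit gives $\gamma(a)=k\gamma(x)$, so $k\mid\gamma(a)$ is necessary; conversely, once $\gamma(a)=k\ell$, the substitution $x=p^{\ell}y$ turns $x^{k}=a$ into $y^{k}=\tilde a$ where $\tilde a=p^{-\gamma(a)}a\in\Z_p^{*}$ shares the digits $a_0,a_1,\dots$ of $a$. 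Hence from now on I may assume $a\in\Z_p^{*}$, i.e. $\gamma(a)=0$. The key elementary observation is that, since $(m,p)=1$, B\'ezout gives integers $s,t$ with $sm+tp=1$; in the abelian group $\Z_p^{*}$ this yields $(\Z_p^{*})^{mp}=(\Z_p^{*})^{m}\cap(\Z_p^{*})^{p}$ (given $a=b^{m}=c^{p}$, the element $d=b^{t}c^{s}$ satisfies $d^{mp}=a^{tp+sm}=a$), so $a$ is an $mp$-th power if and only if it is simultaneously an $m$-th power and a $p$-th power.

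For the $m$-th power part I would simply invoke the already stated criteria: since $(m,p)=1$ and $a$ is a unit, condition (a) of Theorem~\ref{txq} (respectively Theorem~\ref{tx3} when $m=2$, and trivially when $m=1$) is automatic, and these theorems say that a unit $a$ is an $m$-th power in $\mathbb Q_p$ exactly when $a_0$ is an $m$-residue modulo $p$. This accounts for the first half of condition (b).

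The remaining and genuinely delicate point is the $p$-th power part, which is where the main obstacle lies: the polynomial $f(x)=x^{p}-a$ has derivative $f'(x)=px^{p-1}$ of positive $p$-adic valuation, so the naive Hensel lemma (unit derivative) does not apply, and taking the natural approximation $x_{0}=\omega(a_0)$ only reaches the non-strict inequality $|f(x_0)|_p\le|f'(x_0)|_p^{2}=p^{-2}$, which is inconclusive for the refined Hensel estimate. I would instead argue through the structure of the unit group (taking $p$ odd, as the shape of (b) requires; the case $p=2$ needs the separate treatment already visible in Theorem~\ref{tx3}). Writing $a=\omega(a_0)\,v$ with $\omega(a_0)$ the Teichm\"uller representative of $a_0$ and $v\in 1+p\Z_p$ a principal unit, the factor $\omega(a_0)$ is automatically a $p$-th power in the cyclic group $\mu_{p-1}$ because $(p,p-1)=1$; thus $a$ is a $p$-th power iff $v$ is. On the principal units the $p$-adic logarithm gives an isomorphism $\log:1+p\Z_p\to p\Z_p$ carrying the $p$-th power map to multiplication by $p$, whence $(1+p\Z_p)^{p}=1+p^{2}\Z_p$. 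Therefore $v$ is a $p$-th power iff $v\equiv 1\pmod{p^{2}}$, i.e. iff $a\equiv\omega(a_0)\pmod{p^{2}}$. Finally, using the standard congruence $\omega(a_0)\equiv a_0^{p}\pmod{p^{2}}$ for the Teichm\"uller lift together with $a\equiv a_0+a_1p\pmod{p^{2}}$, this is exactly $a_0^{p}\equiv a_0+a_1p\pmod{p^{2}}$, the second half of condition (b). Combining the valuation reduction of the first paragraph with the two power conditions then yields the theorem; I expect the $p$-th power step, and in particular the identification of the image of the $p$-th power map on the principal units, to be the crux of the argument.
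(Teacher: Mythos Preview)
The paper does not give its own proof of this statement; it is quoted from \cite{LOR} and used as a black box (indeed it is only invoked once, in the $p=2$ branch of Theorem~\ref{tti}). So there is no in-paper argument to compare against.

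That said, your argument is correct and cleanly organized. The valuation reduction and the B\'ezout identity $(\mathbb Z_p^{*})^{mp}=(\mathbb Z_p^{*})^{m}\cap(\mathbb Z_p^{*})^{p}$ are standard and sound; the $m$-th power part is exactly Theorem~\ref{txq}. For the $p$-th power part your use of the Teichm\"uller splitting $a=\omega(a_0)v$, the isomorphism $\log:1+p\mathbb Z_p\to p\mathbb Z_p$ (for $p$ odd), and the identity $(1+p\mathbb Z_p)^{p}=1+p^{2}\mathbb Z_p$ is correct, as is the congruence $\omega(a_0)\equiv a_0^{p}\pmod{p^{2}}$ (this follows from $a_0^{p^{n}}\equiv a_0^{p}\pmod{p^{2}}$ for all $n\ge1$, an easy induction using the binomial expansion). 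Your caveat that the statement as written is tailored to $p$ odd is also appropriate: for $p=2$ the condition $a_0^{p}\equiv a_0+a_1p\pmod{p^{2}}$ reduces to $a_1=0$, whereas by Theorem~\ref{tx3} one actually needs $a_1=a_2=0$, so the result (and the reference \cite{LOR}) should be read with $p$ odd in mind. A more elementary route, closer in spirit to how such criteria are usually proved in the cited literature, would replace the logarithm by a direct Hensel argument at level $i=1$: if $y_0\equiv a\pmod p$ then $y_0^{p}\equiv a_0^{p}\pmod{p^{2}}$, so $y_0^{p}\equiv a\pmod{p^{2}}$ holds precisely when $a_0^{p}\equiv a_0+a_1p\pmod{p^{2}}$, and then the strong form of Hensel's lemma (as stated in the paper, with $f(y)=y^{p}-a$, $|f(y_0)|_p\le p^{-3}$ and $|f'(y_0)|_p=p^{-1}$) lifts $y_0$ to a genuine $p$-th root. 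Your structural approach via $\log$/$\exp$ and the Hensel approach are equivalent here; yours is conceptually cleaner, the Hensel version is more self-contained given only the tools quoted in the paper.
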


\begin{lemma} $($ Hensel's lemma, \cite{48}$)$ Let $f(x)$ be
polynomial whose the coefficients
are $p$-adic integers. Let $a_0$ be a $p$-adic integer such that
for some $i\geq0$ we have
\[
\begin{array}{ll}
f(a_0)\equiv0(\operatorname{mod }p^{2i+1}),\\
f^\prime(a_0)\equiv0(\operatorname{mod }p^{i}),\qquad
f^\prime(a_0)\not\equiv0(\operatorname{mod }p^{i+1}).
\end{array}\]
Then $f(x)$ has a unique $p$-adic integer root $a$ which satisfies
$a\equiv a_0(\operatorname{mod }p)$.
\end{lemma}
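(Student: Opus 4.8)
The plan is to construct the root by Newton's method of successive approximation and to keep everything under control through $p$-adic valuations. Write $v_p$ for the $p$-adic valuation normalized by $v_p(p)=1$, so that the hypotheses read $v_p(f(a_0))\ge 2i+1$ and $v_p(f'(a_0))=i$. The only structural input I need from $f$ is the Taylor-type identity, valid for any polynomial with $p$-adic integer coefficients: for $x,h\in\mathbb Z_p$,
\[
f(x+h)=f(x)+f'(x)\,h+h^2\,R(x,h),
\]
where $R\in\mathbb Z_p[x,h]$. This holds because the divided derivatives $f^{(j)}(x)/j!$ again have $p$-adic integer coefficients (for $f=\sum c_mx^m$ one gets $\sum_m c_m\binom{m}{j}x^{m-j}$, and $\binom{m}{j}\in\mathbb Z$), so the higher Taylor terms factor through $h^2$ without leaving $\mathbb Z_p$.

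Next I define the iteration $a_{n+1}=a_n-f(a_n)/f'(a_n)$ and prove by induction the three invariants: (i) $a_n\in\mathbb Z_p$, (ii) $v_p(f'(a_n))=i$, and (iii) $v_p(f(a_n))\ge 2i+2^{\,n}$. The base case is the hypothesis, since $2i+2^0=2i+1$. For the inductive step, set $h_n=-f(a_n)/f'(a_n)$; by (ii) and (iii), $v_p(h_n)=v_p(f(a_n))-i\ge i+2^{\,n}\ge i+1$, so $a_{n+1}\in\mathbb Z_p$ and $a_{n+1}\equiv a_n\pmod{p^{i+1}}$. Substituting $h_n$ into the Taylor identity and using $f'(a_n)h_n=-f(a_n)$, the constant and linear terms cancel and leave $f(a_{n+1})=h_n^2\,R(a_n,h_n)$, whence $v_p(f(a_{n+1}))\ge 2v_p(h_n)\ge 2(i+2^{\,n})=2i+2^{\,n+1}$, which is (iii) at $n+1$. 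Finally, expanding $f'(a_n+h_n)$ by the same identity gives $f'(a_{n+1})\equiv f'(a_n)\pmod{p^{v_p(h_n)}}$ with $v_p(h_n)>i$, so by the strong triangle inequality $v_p(f'(a_{n+1}))=v_p(f'(a_n))=i$, giving (ii). This step — keeping $v_p(f')$ pinned at $i$ while $v_p(f)$ doubles — is the heart of the argument, and it is exactly here that the two-sided condition $f'(a_0)\equiv 0$, $f'(a_0)\not\equiv 0\pmod{p^{i+1}}$ (that is, $v_p(f'(a_0))=i$) is indispensable.

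From invariant (iii) and $v_p(a_{n+1}-a_n)=v_p(h_n)\ge i+2^{\,n}\to\infty$, the sequence $(a_n)$ is Cauchy; since $\mathbb Z_p$ is complete it converges to some $a\in\mathbb Z_p$. Continuity of the polynomial $f$ together with $v_p(f(a_n))\to\infty$ gives $f(a)=0$. Moreover every partial difference $a_{n+1}-a_n$ has valuation $\ge i+1$, so $v_p(a-a_0)\ge i+1$ and in particular $a\equiv a_0\pmod p$, as claimed.

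For uniqueness I would argue inside the ball $\{x:v_p(x-a_0)\ge i+1\}$, which is the region actually pinned down by the hypotheses. If $a,b$ are two roots there, the Taylor identity gives $0=f(b)-f(a)=(b-a)\bigl[f'(a)+(b-a)R(a,b-a)\bigr]$. Since $v_p(a-a_0)\ge i+1$ forces $v_p(f'(a))=i$ (apply the strong triangle inequality to $f'(a)-f'(a_0)$), while $v_p((b-a)R)\ge v_p(b-a)>i$, the bracketed factor has valuation exactly $i$ and is therefore nonzero; hence $b-a=0$. I expect the main obstacle to be purely the valuation bookkeeping of the inductive step, and in writing it cleanly one must resist asserting mod-$p$ uniqueness: the genuinely unique root lives in the smaller ball $v_p(x-a_0)\ge i+1$, which is the correct reading of the statement.
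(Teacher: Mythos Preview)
The paper does not prove this lemma at all; it simply quotes it from \cite{48} (Vladimirov--Volovich--Zelenov) and uses it as a black box. So there is no ``paper's own proof'' to compare against.

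Your argument is the standard Newton--Hensel iteration and is correct. The Taylor identity with integral remainder, the three invariants (integrality of $a_n$, $v_p(f'(a_n))=i$, and $v_p(f(a_n))\ge 2i+2^{\,n}$), and the Cauchy/limit step all go through exactly as you wrote them. Your uniqueness argument in the ball $\{x:v_p(x-a_0)\ge i+1\}$ is also clean, and you are right to flag that the lemma as stated in the paper is slightly imprecise: for $i>0$ the root is unique modulo $p^{i+1}$, not merely modulo $p$. (When $i=0$ the two readings coincide, and in the paper's applications to the quadratic $q^{-3}f(z)$ the relevant case is indeed $i=0$, so no harm is done downstream.)
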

Denote
$$
\mathcal E_p=\left\{x\in\mathbb Z_p^*: |x-1|_p<p^{-1/(p-1)}\right\}.
$$

A more detailed description of $p$-adic calculus and $p$-adic
mathematical physics can be found in \cite{29,48}.

Let $(X,{\mathcal B})$ be a measurable space, where ${\mathcal B}$
is an algebra of subsets of $X$. A function $\mu: {\mathcal B}\to
\mathbb Q_p$ is said to be a $p$-adic measure if for any $A_1, . . .
,A_n\in {\mathcal B}$ such that $A_i\cap A_j = \emptyset$, $i\ne
j$, the following holds:
$$\mu(\bigcup^n_{j=1}A_j)=\sum^n_{j=1}\mu(A_j).$$
A $p$-adic measure is called a probability measure if $\mu(X) =
1$. A $p$-adic probability measure $\mu$ is called {\it bounded}
if $\sup\{|\mu(A)|_p:A\in\mathcal B\}<\infty$ (see, \cite{k91}).

We call a $p$-adic measure a probability measure \cite{16} if $\mu(X)=1$.

\subsection{\bf Cayley tree.}

The Cayley tree $\Gamma^k$
of order $ k\geq 1 $ is an infinite tree, i.e., a graph without
cycles, such that exactly $k+1$ edges originate from each vertex.
Let $\Gamma^k=(V, L)$ where $V$ is the set of vertices and  $L$ the
set of edges.
Two vertices $x$ and $y$ are called {\it nearest neighbors} if there
exists an
edge $l \in L$ connecting them.
We shall use the notation $l=\langle x,y\rangle$.
A collection of nearest neighbor pairs
$\langle x,x_1\rangle, \langle x_1,x_2
\rangle,...,\langle x_{d-1},y\rangle$ is called a {\it
path} from $x$ to $y$. The distance $d(x,y)$ on the
Cayley tree is the number of
 edges of the shortest path from $x$ to $y$.

For a fixed $x^0\in V$, called the root, we set
\begin{equation*}
W_n=\{x\in V\,| \, d(x,x^0)=n\}, \qquad V_n=\bigcup_{m=0}^n W_m
\end{equation*}
and denote
$$
S(x)=\{y\in W_{n+1} :  d(x,y)=1 \}, \ \ x\in W_n, $$ the set  of
{\it direct successors} of $x$.

Let $G_k$ be a free product of $k + 1$ cyclic groups of the
second order with generators $a_1, a_2,\dots, a_{k+1}$,
respectively.
It is known that there exists a one-to-one correspondence
between the set
 of vertices $V$ of the Cayley tree $\Gamma^k$ and the group $G_k$.

\subsection{\bf $p$-adic Potts model}

Let $\mathbb Q_p$ be the field of $p$-adic numbers and
$\Phi$ be a finite set. A configuration $\sigma$ on $V$ is
then defined
as a function $x\in V\to\sigma(x)\in\Phi$; in a similar fashion
one defines a configuration $\sigma_n$ and $\sigma^{(n)}$ on $V_n$
and $W_n$ respectively. The set of all configurations on $V$
(resp. $V_n,\ W_n$) coincides with $\Omega=\Phi^V$
(resp.$\Omega_{V_n}=\Phi^{V_n},\ \Omega_{W_n}=\Phi^{W_n}$). Using
this, for given configurations $\sigma_{n-1}\in\Omega_{V_{n-1}}$
and $\sigma^{(n)}\in\Omega_{W_n}$ we define their concatenations
by
$$
(\sigma_{n-1}\vee\sigma^{(n)})(x)=\left\{\begin{array}{ll}
\sigma_{n-1}(x),& \text{if}\  x\in V_{n-1},\\
\sigma^{(n)}(x),& \text{if}\  x\in W_n.
 \end{array}\right.
$$
It is clear that $\sigma_{n-1}\vee\sigma^{(n)}\in\Omega_{V_n}.$\\

Let $G^*_k$ be a subgroup of the group $G_k$. A function $h_x$
(for example, a configuration $\sigma(x)$) of $x\in G_k$
is called
$G^*_k$-periodic
if $h_{yx}=h_x$ (resp. $\sigma(yx)=\sigma(x)$) for any
$x\in G_k$ and
$y\in G^*_k.$

A $G_k$-periodic function is called {\it translation-invariant}.

We consider {\it $p$-adic Potts model} on a Cayley tree,
where the spin takes values in the set
$\Phi:=\{1,2,\dots,q\}$, and is assigned to the vertices
of the tree.

The (formal) Hamiltonian of $p$-adic Potts model is
\begin{equation}\label{ph}
H(\sigma)=J\sum_{\langle x,y\rangle\in L}
\delta_{\sigma(x)\sigma(y)},
\end{equation}
where $J\in B(0, p^{-1/(p-1)})$ is a coupling constant,
$\langle x,y\rangle$ stands for nearest neighbor vertices
and $\delta_{ij}$ is the Kroneker's
symbol:
$$\delta_{ij}=\left\{\begin{array}{ll}
0, \ \ \mbox{if} \ \ i\ne j\\[2mm]
1, \ \ \mbox{if} \ \ i= j.
\end{array}\right.
$$

\subsection{\bf $p$-adic Gibbs measure}
Define a finite-dimensional distribution of a $p$-adic
probability
 measure $\mu$ in the volume $V_n$ as
\begin{equation}\label{p*}
\mu_{\tilde h}^{(n)}(\sigma_n)=Z_{n,\tilde h}^{-1}\exp_p\left\{H_n(\sigma_n)
+\sum_{x\in W_n}{\tilde h}_{\sigma(x),x}\right\},
\end{equation}
where  $Z_{n,\tilde h}$ is the normalizing factor,
$\{{\tilde h}_x=({\tilde h}
_{1,x},\dots, {\tilde h}_{q,x})\in\mathbb Q_p^q, x\in V\}$
is a
collection of vectors and
$H_n(\sigma_n)$ is the restriction of Hamiltonian on $V_n$.

We say that the $p$-adic probability distributions (\ref{p*})
are
compatible if for all
$n\geq 1$ and $\sigma_{n-1}\in \Phi^{V_{n-1}}$:
\begin{equation}\label{us}
\sum_{\omega_n\in \Phi^{W_n}}\mu_{\tilde h}^{(n)}
(\sigma_{n-1}\vee \omega_n)
=\mu_{\tilde h}^{(n-1)}(\sigma_{n-1}).
\end{equation}
Here $\sigma_{n-1}\vee \omega_n$ is the concatenation of the
configurations.\\
We note that an analog of the Kolmogorov extension theorem for
distributions can be proved for $p$-adic distributions given by (\ref{p*})
(see \cite{16}). So, if for some function $\tilde h$ the measures
$\mu_{\tilde h}^{(n)}$ satisfy
the compatibility condition (\ref{us}), then there is a unique $p$-adic
probability measure, which we denote by $\mu_{\tilde h}$, since it
depends on $\tilde h$.
Such a measure is called a {\it $p$-adic Gibbs measure} (pGM)
corresponding
to the Hamiltonian (\ref{ph}) and vector-valued function
${\tilde h}_x, x\in V$. The set of all
$p$-adic Gibbs measures associated with functions
$\tilde h=\{\tilde h_x,\ x\in V\}$ is
denoted by $\mathcal G(H)$. If there are at least two distinct $p$-adic
Gibbs measures $\mu,\nu\in\mathcal G(H)$ such that $\mu$ is
bounded and $\nu$ is
unbounded, then it is said that {\it a phase transition occurs}.
By another words,
one can find two different functions $s$ and $h$ defined on $V$
such that there exist
the corresponding measures $\mu_s$ and $\mu_h$, for which one is
bounded,
another one
is unbounded. Moreover, if there is a sequence of sets $\{A_n\}$
such that
$A_n\in\Omega_{V_n}$ with $|\mu(A_n)|_p\to0$ and
$|\nu(A_n)|_p\to\infty$
as $n\to\infty$,
then we say that there occurs a {\it strong phase transition}
\cite{MNGM,MFHA}.

The following statement describes conditions on ${\tilde h}_x$
guaranteeing
compatibility of $\mu_{\tilde h}^{(n)}(\sigma_n)$.

\begin{thm}\label{ep} \label{Theorem1} (see \cite[p.89]{MRasos})
The $p$-adic probability distributions
$\mu_n(\sigma_n)$, $n=1,2,\ldots$, in
(\ref{p*}) are compatible for Potts model iff for any $x\in V\setminus\{x^0\}$
the following equation holds:
\begin{equation}\label{p***}
 h_x=\sum_{y\in S(x)}F(h_y,\theta),
\end{equation}
where $F: h=(h_1, \dots,h_{q-1})\in\mathbb Q_p^{q-1}\to F(h,\theta)=
(F_1,\dots,F_{q-1})\in\mathbb Q_p^{q-1}$ is defined as
$$F_i=\log_p\left({(\theta-1)\exp_p(h_i)+\sum_{j=1}^{q-1}\exp_p(h_j)+
1\over \theta+ \sum_{j=1}^{q-1}\exp_p(h_j)}\right),$$
$\theta=\exp_p(J)$, $S(x)$ is the set of direct successors of
$x$ and $h_x=\left(h_{1,x},\dots,h_{q-1,x}\right)$ with
\begin{equation}\label{hh}
h_{i,x}={\tilde h}_{i,x}-{\tilde h}_{q,x}, \ \ i=1,\dots,q-1.
\end{equation}
\end{thm}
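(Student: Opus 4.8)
The plan is to unwind the compatibility condition (\ref{us}) using the explicit form (\ref{p*}) together with the tree structure, reducing it to a vertex-by-vertex functional identity. First I would record the additive decomposition of the Hamiltonian across one generation: for $\sigma_{n-1}\in\Omega_{V_{n-1}}$ and $\omega_n\in\Omega_{W_n}$,
\[
H_n(\sigma_{n-1}\vee\omega_n)=H_{n-1}(\sigma_{n-1})+J\sum_{x\in W_{n-1}}\sum_{y\in S(x)}\delta_{\sigma_{n-1}(x)\omega_n(y)},
\]
which holds because on the Cayley tree every vertex $y\in W_n$ is joined by a single edge to its unique predecessor $x\in W_{n-1}$. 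Substituting this into (\ref{us}) and factoring out $\exp_p\{H_{n-1}(\sigma_{n-1})\}$, the sum over $\omega_n\in\Phi^{W_n}$ splits into a product over $x\in W_{n-1}$ and over $y\in S(x)$, since the successor sets $S(x)$ partition $W_n$ and each $\omega_n(y)$ ranges independently over $\Phi$.

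After this factorisation the compatibility condition reads, for every $\sigma_{n-1}$,
\[
\frac{Z_{n-1,\tilde h}}{Z_{n,\tilde h}}\prod_{x\in W_{n-1}}\prod_{y\in S(x)}\Big(\sum_{i=1}^q\exp_p\{J\delta_{\sigma_{n-1}(x)i}+\tilde h_{i,y}\}\Big)=\prod_{x\in W_{n-1}}\exp_p\{\tilde h_{\sigma_{n-1}(x),x}\}.
\]
The key algebraic step is to eliminate the normalising constants and the coupling between distinct vertices: I would fix a vertex $x\in W_{n-1}$ and a spin value $s\in\{1,\dots,q-1\}$, and compare the configuration with $\sigma_{n-1}(x)=s$ against the one with $\sigma_{n-1}(x)=q$, all other values held fixed. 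Taking the ratio cancels $Z_{n-1,\tilde h}/Z_{n,\tilde h}$ and every factor not attached to $x$, leaving
\[
\exp_p\{\tilde h_{s,x}-\tilde h_{q,x}\}=\prod_{y\in S(x)}\frac{\sum_{i=1}^q\exp_p\{J\delta_{si}+\tilde h_{i,y}\}}{\sum_{i=1}^q\exp_p\{J\delta_{qi}+\tilde h_{i,y}\}}.
\]
Dividing numerator and denominator of each factor by $\exp_p\{\tilde h_{q,y}\}$, writing $\theta=\exp_p(J)$ and passing to the variables $h_{i,y}$ of (\ref{hh}) (so that the $i=q$ term becomes $1$), the $y$-th factor is exactly $\exp_p\{F_s(h_y,\theta)\}$. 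Applying $\log_p$ and assembling the components $s=1,\dots,q-1$ yields (\ref{p***}); reversing the computation gives the converse, where the recovered value of $Z_{n,\tilde h}/Z_{n-1,\tilde h}$ makes the distributions compatible.

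The main obstacle is not combinatorial but $p$-adic analytic: unlike the real case, the manipulations of $\exp_p$ and $\log_p$ are legitimate only inside their domains of convergence. One must check that the hypothesis $J\in B(0,p^{-1/(p-1)})$ places $\theta=\exp_p(J)$ in $\mathcal E_p$, that each $\exp_p\{\tilde h_{i,x}\}$ is defined, and, most delicately, that the argument of $\log_p$ defining $F_s$ lies in the ball on which $\log_p$ converges and satisfies $\log_p\circ\exp_p=\mathrm{id}$, so that the identity $\exp_p\{a+b\}=\exp_p\{a\}\exp_p\{b\}$ and the cancellation $\log_p\exp_p\{F_s\}=F_s$ are valid. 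Verifying these domain conditions, rather than the formal algebra, is the crux of the argument.
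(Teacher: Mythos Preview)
The paper does not prove this theorem: it is quoted from \cite[p.~89]{MRasos} and stated without proof, so there is no argument in the paper to compare against. Your outline is the standard derivation used in that reference and elsewhere---decompose $H_n$ across one generation, factor the sum over $\omega_n$ into a product over $y\in W_n$, take ratios at a single boundary vertex to kill the partition functions, and read off (\ref{p***}) after the change of variables (\ref{hh})---and it is correct as a strategy; your closing remark about the domains of $\exp_p$ and $\log_p$ is exactly the point that distinguishes the $p$-adic argument from the real one.
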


From Theorem \ref{ep} it follows that for any $h=\{h_x,\ \ x\in V\}$
satisfying (\ref{p***}) there exists a unique pGM $\mu_h$ for the
$p$-adic Potts model.

\begin{thm}\label{qpn}\cite{MRasos} If $q\not\in p\mathbb N$
then there exists
a unique $p$-adic Gibbs measure $\mu_0$ for the $p$-adic Potts
model
(\ref{ph}) on a Cayley tree of order $k$. Moreover, a measure
$\mu_0$ is
translation-invariant.
\end{thm}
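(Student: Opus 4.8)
The plan is to show that, under the hypothesis $q\not\in p\mathbb N$, the functional system (\ref{p***}) has the single solution $h_x=0$ for all $x\in V$, and that this solution corresponds to the translation-invariant measure $\mu_0$. First I would record that $h\equiv 0$ is always a solution: substituting $h_{i,y}=0$ gives $\exp_p(h_{i,y})=1$, so each $F_i$ reduces to $\log_p\frac{(\theta-1)+(q-1)+1}{\theta+(q-1)}=\log_p 1=0$, whence $h_x=\sum_{y\in S(x)}0=0$. Since this solution is constant on $V$, it is $G_k$-periodic, hence translation-invariant, and the measure it determines is $\mu_0$. It therefore remains only to prove that no other solution exists.

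The heart of the argument is a sharp estimate for $F$ on its natural domain. For a $p$-adic Gibbs measure the vectors $h_x$ must lie in the region where (\ref{p*}) converges, so $|h_{i,x}|_p<p^{-1/(p-1)}$ for all $x$ and $i$; writing $u_i=\exp_p(h_{i,y})$ one then has $u_i\in\mathcal E_p$ and $|u_i-1|_p=|h_{i,y}|_p$. A direct simplification of the fraction inside $F_i$ gives
$$R_i:=\frac{(\theta-1)u_i+\sum_{j=1}^{q-1}u_j+1}{\theta+\sum_{j=1}^{q-1}u_j},\qquad R_i-1=\frac{(\theta-1)(u_i-1)}{\theta+\sum_{j=1}^{q-1}u_j}.$$
Here I would use the hypothesis $q\not\in p\mathbb N$ decisively: it yields $|q|_p=1$, and since $\theta-1$ and each $u_j-1$ have $p$-adic norm strictly less than $1$, the denominator, equal to $q+(\theta-1)+\sum_{j}(u_j-1)$, has norm exactly $1$. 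Consequently $R_i\in\mathcal E_p$ with $|R_i-1|_p=|\theta-1|_p\,|u_i-1|_p$, and applying $|\log_p(1+t)|_p=|t|_p$ produces the key identity $|F_i(h_y,\theta)|_p=|\theta-1|_p\,|h_{i,y}|_p$.

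Finally I would run an ultrametric contraction over the whole tree. Put $M=\sup_{x,i}|h_{i,x}|_p$, which is finite (bounded by $p^{-1/(p-1)}$). From (\ref{p***}) and the strong triangle inequality, for every $x$ and $i$,
$$|h_{i,x}|_p\le\max_{y\in S(x)}|F_i(h_y,\theta)|_p=|\theta-1|_p\max_{y\in S(x)}|h_{i,y}|_p\le|\theta-1|_p\,M,$$
so taking the supremum over $x$ and $i$ gives $M\le|\theta-1|_p\,M$. Since $|\theta-1|_p=|J|_p<p^{-1/(p-1)}<1$, this forces $M=0$, i.e.\ $h_x\equiv 0$. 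Hence $\mu_0$ is the only $p$-adic Gibbs measure, and it is translation-invariant.

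The main obstacle is the norm computation of the denominator: the whole estimate collapses into a genuine contraction only because $|q|_p=1$ makes $\theta+\sum_j u_j$ a unit. If instead $p\mid q$, this denominator may have small norm, the factor $|\theta-1|_p$ in the bound for $F_i$ is amplified, and contraction fails — which is precisely the mechanism behind phase transitions for $q\in p\mathbb N$. So the crux is isolating that $q\not\in p\mathbb N$ is exactly the condition guaranteeing the unit denominator and thus uniqueness.
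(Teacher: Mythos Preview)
Your argument is correct. The paper itself does not prove this theorem but simply quotes it from \cite{MRasos}; the proof there is exactly the ultrametric contraction you carry out, based on the identity $R_i-1=(\theta-1)(u_i-1)\big/\big(\theta+\sum_j u_j\big)$ and the fact that $|q|_p=1$ makes the denominator a unit, so that $|F_i(h_y,\theta)|_p=|\theta-1|_p\,|h_{i,y}|_p$ and the strong triangle inequality forces $h\equiv 0$.

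One minor point worth tightening: equation (\ref{p***}) is asserted only for $x\in V\setminus\{x^0\}$, so the supremum $M$ should be taken over those $x$. Since $S(x)\subset V\setminus\{x^0\}$ for every such $x$, the inequality $M\le|\theta-1|_p\,M$ still closes and gives $h_x=0$ for all $x\neq x^0$; the root value is irrelevant to the measure after normalization, so uniqueness follows.
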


\section {translation-invariant $p$-adic gibbs measures for the potts model.}

In this section, we consider $p$-adic Gibbs measures which
are translation-invariant,
 i.e., we assume $h_x=h=(h_1,\dots,h_{q-1})\in\mathbb Q_p^{q-1}$ for all
 $x\in V$.  Then from equation (\ref{p***}) we get $h=kF(h,\theta)$, i.e.,
\begin{equation}\label{pt}
h_i=k\log_p\left({(\theta-1)\exp_p(h_i)+\sum_{j=1}^{q-1}\exp_p(h_j)+
1\over \theta+ \sum_{j=1}^{q-1}\exp_p(h_j)}\right),\ \ i=1,\dots,q-1.
\end{equation}
Denoting $z_i=\exp_p(h_i), i=1,\dots,q-1$, we get from (\ref{pt})
\begin{equation}\label{pt1}
z_i=\left({(\theta-1)z_i+\sum_{j=1}^{q-1}z_j+1\over \theta+
\sum_{j=1}^{q-1}z_j}\right)^k,\ \ i=1,\dots,q-1.
\end{equation}
Note that for a solution $z=(z_1,...,z_{q-1})$ of the system of
equations
(\ref{pt1}) there exists a unique  TIpGMs for the Potts model on the
Cayley
tree of order $k$ if and only if $z\in\mathcal E_p^{q-1}$.

For some $m=1,\dots,q-1$ we shall consider the following equation
\begin{equation}\label{rm}
z=f_m(z)\equiv \left({(\theta+m-1)z+q-m\over mz+q-m-1+\theta}\right)^k
\end{equation}
\begin{thm} Let $M\subset \{1,\dots,q-1\}$ and $|M|=m$. If $z^*$ is a
solution to
the equation (\ref{rm}),
then $z=(z_1,\dots,z_{q-1})$ is a solution to (\ref{pt1}),
where
\begin{equation}\label{z_i=z}
z_i=\left\{\begin{array}{ll}
1, \ \ \mbox{if} \ \ i\notin M\\[3mm]
z^*, \ \ \mbox{if} \ \ i\in M.
\end{array}
\right.
\end{equation}
\end{thm}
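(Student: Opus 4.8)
The plan is to verify directly that the prescribed vector $z=(z_1,\dots,z_{q-1})$ given by (\ref{z_i=z}) satisfies each of the $q-1$ scalar equations in the system (\ref{pt1}). The key observation is that this substitution makes the common sum $\sum_{j=1}^{q-1}z_j$ collapse to a single expression in $z^*$, after which the $q-1$ equations decouple into only two genuinely distinct ones: one for the indices lying in $M$, and one for the indices lying outside $M$. Thus the whole $(q-1)$-dimensional problem is reduced to checking these two identities.

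First I would compute the common sum. Since $z_i=z^*$ for the $m$ indices in $M$ and $z_i=1$ for the remaining $q-1-m$ indices, we obtain
\[
\sum_{j=1}^{q-1}z_j = mz^* + (q-1-m).
\]
Next, for an index $i\in M$ (so that $z_i=z^*$) I would substitute this sum into the right-hand side of (\ref{pt1}). A short simplification shows that the numerator becomes $(\theta+m-1)z^* + (q-m)$ and the denominator becomes $mz^*+(q-m-1+\theta)$, so the right-hand side is exactly $f_m(z^*)$. By hypothesis $z^*=f_m(z^*)$, whence the equation for $i\in M$ reduces to the identity $z^*=f_m(z^*)$ and is satisfied.

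For an index $i\notin M$ (so that $z_i=1$), the same substitution gives $(\theta-1)\cdot 1+\sum_{j}z_j+1$ in the numerator and $\theta+\sum_{j}z_j$ in the denominator, and both expressions equal $mz^*+(q-m-1+\theta)$; their ratio is therefore $1$, and raising to the $k$-th power yields $1=z_i$, so this equation holds trivially. Since every scalar equation of (\ref{pt1}) is then verified, $z$ is a solution, as claimed. I do not expect any genuine obstacle here, the argument being a direct computation; the only point requiring a little care is the well-definedness of the fractions in $\mathbb Q_p$, i.e. that the denominator $mz^*+(q-m-1+\theta)$ does not vanish, but this is already implicit in the assumption that $z^*$ solves (\ref{rm}), where precisely the same denominator occurs.
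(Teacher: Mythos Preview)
Your proof is correct and follows essentially the same approach as the paper: a direct substitution of the vector (\ref{z_i=z}) into the system (\ref{pt1}), splitting into the two cases $i\in M$ and $i\notin M$, computing the common sum $\sum_j z_j = mz^*+(q-1-m)$, and observing that the $i\in M$ equations reduce to $z^*=f_m(z^*)$ while the $i\notin M$ equations reduce to the identity $1=1$. Your added remark on the nonvanishing of the denominator is a minor extra care not explicitly stated in the paper.
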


\begin{proof} It is easy to see that $z_i=1$ is a solution of $i$ th
equation of
the system (\ref{pt1}) for each $i=1,2,\dots,q-1$. Consequently,
$z=(1,\dots,1)\in\mathcal E_p^{q-1}$ is a solution to (\ref{pt1}).
Assume that $M\neq\emptyset$. Without loss of generality we can take
$M=\{1,2, \dots, m\}$, $m\leq q-1$. Put $z_i=z,\ i\in M$ and
$z_i=1,\ i\notin M$ from (\ref{pt1}) we get (\ref{rm}).
As $z^*$ is a solution to (\ref{rm}) we get
$z=(\underbrace{z^*,\dots,z^*}_m,1\dots,1)$
is a solution to (\ref{pt1}).
\end{proof}
Now one can ask for what kind of $k$ and prime number $p$ any solution
$z=(z_1,\dots,z_{q-1})$
to (\ref{pt1}) has the form (\ref{z_i=z}). In \cite{RKh} it was proven
that if $k=2$ than for any
prime number $p$ any solution $z\in\mathcal E_p^{q-1}$ to (\ref{pt1})
has the form (\ref{z_i=z}).
In this work we shall consider case $k>2$.
\begin{thm}\label{tti} If $(k^2-k)/2$ is not divisible by $p$ then
any solution
$z\in\mathcal E_p^{q-1}$ to the equation (\ref{pt1}) has the form (\ref{z_i=z}).
\end{thm}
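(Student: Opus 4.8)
The plan is to recast the system (\ref{pt1}) as a single fixed‑point problem and then bound the number of its fixed points lying in $\mathcal E_p$. Writing $S=\sum_{j=1}^{q-1}z_j$ and
\[
u(t)=\frac{(\theta-1)t+S+1}{\theta+S},\qquad g(t)=u(t)^k,
\]
each coordinate of a solution $z$ satisfies $z_i=g(z_i)$ with the \emph{same} $S$, so all $z_i$ are fixed points of one and the same map $g$. Since $u(1)=1$ we get $g(1)=1$, i.e. $t=1$ is always a fixed point. A solution has the form (\ref{z_i=z}) exactly when its coordinates take at most the two values $1$ and $z^*$; hence it suffices to prove that, when $(k^2-k)/2$ is not divisible by $p$, the map $g$ has at most one fixed point in $\mathcal E_p$ other than $1$.

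The first step is to extract the algebraic identity that two fixed points must obey. If $x,y\in\mathcal E_p$ are fixed points then, since $u$ is affine, $u(x)-u(y)=\frac{\theta-1}{\theta+S}(x-y)$; subtracting $x=u(x)^k$ and $y=u(y)^k$ and dividing by $x-y\neq0$ gives
\[
\theta+S=(\theta-1)\sum_{l=0}^{k-1}u(x)^l u(y)^{k-1-l}.
\]
Applying this to the pairs $(x,1)$ and $(y,1)$ (using $u(1)=1$), subtracting the two resulting expressions for $\theta+S$, and dividing out the nonzero factors $\theta-1$ and $u(x)-u(y)$, the relation collapses to
\[
\sum_{l=0}^{k-2}\ \sum_{r=0}^{k-2-l}u(x)^l u(y)^r=0 .
\]
The value of this identity is that its ``constant part'', obtained by replacing every $u(x),u(y)$ by $1$, equals $\sum_{l=0}^{k-2}(k-1-l)=(k^2-k)/2$, precisely the quantity in the hypothesis.

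The analytic input is that for every fixed point $x\in\mathcal E_p$ the number $u(x)=1+\frac{(\theta-1)(x-1)}{\theta+S}$ is close to $1$. Heuristically $|\theta-1|_p$ and $|x-1|_p$ are both $<p^{-1/(p-1)}$, while $\theta+S\equiv q$, so when $|\theta+S|_p$ is not too small one has $|u(x)-1|_p<1$. Then each monomial $u(x)^l u(y)^r$ is congruent to $1$ modulo a quantity of norm $<1$, whence the double sum equals $(k^2-k)/2$ plus an error of norm $<1$. If $p\nmid(k^2-k)/2$ then $|(k^2-k)/2|_p=1$, so the sum is a $p$-adic unit and cannot vanish — contradicting the identity. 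This forbids two distinct fixed points other than $1$ inside $\mathcal E_p$, which is the assertion.

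The step I expect to be the real obstacle is the lower bound on $|\theta+S|_p$ used in the previous paragraph. Because $\theta\equiv1$ and each $z_j\equiv1$, one has $\theta+S\equiv q$, so when $p\mid q$ — exactly the regime in which nontrivial measures and phase transitions occur — the denominator $\theta+S$ is divisible by $p$ and may be very small, leaving open the dangerous possibility that $u(x)$ is a nontrivial $k$-th root of unity modulo $p$ rather than close to $1$. To make the argument fully rigorous I would replace the pairwise comparison by a Newton‑polygon count: writing $P(t)=((\theta-1)t+S+1)^k-(\theta+S)^k t$ and $P(1+s)=s\,Q(1+s)$, the roots $s$ of $Q(1+s)$ with $|s|_p<p^{-1/(p-1)}$ are exactly the fixed points of $g$ in $\mathcal E_p$ other than $1$. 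One computes
\[
b_0=(\theta+S)^{k-1}\bigl(k(\theta-1)-(\theta+S)\bigr),\qquad b_l=\binom{k}{l+1}(\theta+S)^{k-l-1}(\theta-1)^{l+1},
\]
so that the coefficient $b_1=\binom{k}{2}(\theta+S)^{k-2}(\theta-1)^2$ carries no extra $p$-divisibility once $p\nmid\binom{k}{2}=(k^2-k)/2$; examining the slopes of the Newton polygon then shows that at most one root lies in the disc $|s|_p<p^{-1/(p-1)}$, which counts all fixed points in $\mathcal E_p$ regardless of their residue and thereby handles the $u(x)\not\equiv1$ case as well. The crux of the whole argument is therefore the interplay between the valuation of $\theta+S$ and the binomial coefficient $(k^2-k)/2$.
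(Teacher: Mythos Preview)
Your argument is essentially the paper's own proof, recast in slightly different language. The paper also passes to $k$-th roots --- in effect setting $x_i=u(z_i)$ in your notation, since the right-hand fraction of (\ref{pt1}) is a $k$-th root of $z_i$ --- and from the relation $\theta-1=\dfrac{\sum_j x_j^k+q-m}{x_i(x_i^{k-2}+\cdots+1)}$ obtains precisely your identity
\[
(x_i-x_j)\sum_{n=0}^{k-2}\sum_{s=0}^{n}x_i^{\,n-s}x_j^{\,s}=0,
\]
concluding because the double sum has $\tfrac{k^2-k}{2}$ terms, each congruent to $1$ once $x_i,x_j\in\mathcal E_p$. Up to that point the two arguments coincide.

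The step you single out as the ``real obstacle'' --- showing $|u(z_i)-1|_p<1$ when $p\mid q$ makes $|\theta+S|_p$ small --- is exactly the step the paper does not justify: it simply writes ``As $x_i,x_j\in\mathcal E_p$'' and proceeds. So your scruple is well placed. Your Newton-polygon remedy, however, is not yet a proof. With $d=v_p(\theta+S)$ and $e=v_p(\theta-1)$ one has $v_p(b_l)=v_p\binom{k}{l+1}+(k{-}1{-}l)d+(l{+}1)e$ for $l\ge1$; when $d>e$ the polygon can carry a long segment of slope $e-d$, producing many roots over $\overline{\mathbb Q}_p$ with valuation $d-e\ge1$, and the polygon alone does not separate $\mathbb Q_p$-rational roots from conjugate pairs. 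The bare hypothesis $p\nmid\binom{k}{2}$ controls $v_p(b_1)$ but not the global shape of the hull, so an additional argument constraining $d$ (equivalently, the residue of $u(z_i)$) is still needed.
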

\begin{proof}
For a given $M\subset \{1,\dots,q-1\}$ one can take $z_i=1$ for any
$i\notin M$.
Let $\emptyset\ne M\subset\{1,\dots,q-1\}$, without loss of generality
we can take
$M=\{1,2, \dots, m\}$, $m\leq q-1$, i.e. $z_i=1$, $i=m+1,\dots, q$.
Now we shall
prove that $z_1=z_2=\dots=z_m$.

Let $p>2$. Since $(k^2-k,p)=1$ we get $(k,p)=1$. As
$z_i\in\mathcal E_p,\ i=1,\dots,q-1$ then by Theorem \ref{txq}
there exists $\sqrt[k]{z_i}$ in $\mathbb Q_p$. Denote
$\sqrt[k]{z_i}=x_i$, $i=1,\dots,m$.
Then from (\ref{pt1}) we have
\begin{equation}\label{r1}
x_i=\frac{(\theta-1)x_i^k+\sum_{j=1}^mx_j^k+q-m}
{\sum_{j=1}^mx_j^k+q-m-1+\theta}, \ \ i=1,\dots,m.
\end{equation}
By assumption $x_i\neq1, i=1,2,...,m$ from (\ref{r1}) we get
$$\theta-1=\frac{(x_i-1)\left(\sum_{j=1}^mx_j^k+q-m\right)}{x_i^k-x_i}=
\frac{\sum_{j=1}^mx_j^k+q-m}{x_i(x_i^{k-2}+x^{k-3}_i+\dots+1)}, \ \
i=1,\dots,m.$$
From these equations we get
$$x_i(x_i^{k-2}+x^{k-3}_i+\dots+1)=x_j(x_j^{k-2}+x^{k-3}_j+\dots+1),$$
 which is equivalent to
\begin{equation}\label{x_i-x_j}
\left(x_i-x_j\right)\sum_{n=0}^{k-2}\sum_{s=0}^{n}x_i^{n-s}x_j^s=0
\end{equation}
As $x_i,x_j\in\mathcal E_p$ and $(k^2-k,p)=1$ we have
$$\left|\sum_{n=0}^{k-2}\sum_{s=0}^{n}x_i^{n-s}x_j^s\right|_p=
\left|k^2-k\right|_p=1.$$
Hence,
$x_i=x_j$ for any $i,j\in \{1,\dots,m\}$.

Let $p=2$. Since $k^2-k\not\equiv 0(\operatorname{mod} 4)$
we get $k=4n+2$ or $k=4n+3$. It is clear that
$(k,2)=1$ if $k=4n+3$. In this case by Theorem \ref{txq}
there exists $\sqrt[k]{z_i},\ i=1,\dots,m$ in
$\mathbb Q_2$. If $k=4n+2$ then by Theorem \ref{tx2} there
exists $\sqrt[k]{z_i}$.
Denote $x_i=\sqrt[k]{z_i}$. Note that $\left|x_i-1\right|_2
\leq\frac{1}{2}$. By $p$-adic norm's property we obtain
$$\left|\sum_{n=0}^{k-2}\sum_{s=0}^{n}x_i^{n-s}x_j^s\right|_p=
\left|\frac{k^2-k}{2}\right|_p=1.$$
From (\ref{x_i-x_j}) we get $x_i=x_j$ for any $i,j\in \{1,\dots,m\}$.
\end{proof}

By this theorem we have that any TIpGM of the Potts model
corresponds to a solution of the (\ref{rm}).

\begin{rk}
Note that the condition $(\frac{k^2-k}{2},p)=1$ is
a sufficient to any solution of the equation (\ref{pt1}) has
the form (\ref{z_i=z}). In \cite{RKh} it was shown that if
$k=p=3$ then the equation (\ref{pt1}) has a
solution $z\in\mathcal E_3^{q-1}$ which is not represented in
the form (\ref{z_i=z}).
\end{rk}

Let $M\subset \{1,\dots, q-1\}$, with $|M|=m$. Then corresponding
solution of (\ref{rm})
we denote by $z(M)=\exp_p(h(M))$. It is clear that $h(M)=h(m)$, i.e.
it only depends on cardinality of $M$.
Put
$${\mathbf 1}_M=(e_1,\dots,e_q), \ \ \mbox{with} \ \ e_i=1 \ \
\mbox{if} \ \ i\in M, \ \ e_i=0 \ \ \mbox{if} \ \ i\notin M.$$

We denote by $\mu_{h(M)\mathbf 1_M}$ the TIpGMs
corresponding to the solution $h(M)$.

The following proposition is useful.

\begin{pro}\label{tp} For any finite $\Lambda\subset V$ and
any $\sigma_\Lambda\in \{1,\dots,q\}^\Lambda$ we have
\begin{equation}\label{mu}
\mu_{h(M){\mathbf 1}_M}(\sigma_\Lambda)=\mu_{h(M^c)
{\mathbf 1}_{M^c}}(\sigma_\Lambda),
\end{equation}
where $M^c=\{1,\dots,q\}\setminus M$ and $h(M^c)=-h(M)$.
\end{pro}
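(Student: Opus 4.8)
The plan is to exploit the fact that the measure $\mu_{\tilde h}$ depends on the boundary field $\tilde h=\{\tilde h_x\}$ only through the reduced field $h$ recorded in (\ref{hh}); equivalently, adding one and the same scalar $c\in\mathbb Q_p$ to all $q$ spin components of $\tilde h_x$ (at every vertex $x$) leaves the measure unchanged. Granting this gauge invariance, the whole proposition reduces to a one-line check: that the two constant boundary fields entering (\ref{mu}), namely $\tilde h^M\equiv h(M)\mathbf 1_M$ and $\tilde h^{M^c}\equiv h(M^c)\mathbf 1_{M^c}$, differ by a scalar multiple of the all-ones vector. Here one must remember that the complement is taken inside $\{1,\dots,q\}$, so that $q\in M^c$ while $q\notin M$, and hence $\mathbf 1_M+\mathbf 1_{M^c}=(1,\dots,1)$.

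First I would justify the gauge invariance directly from (\ref{p*}). Under the replacement $\tilde h_{i,x}\mapsto\tilde h_{i,x}+c$, with the same scalar $c$ for every spin value $i$ and every vertex $x$, the boundary sum $\sum_{x\in W_n}\tilde h_{\sigma(x),x}$ changes by the configuration-independent amount $c\,|W_n|$; its $\exp_p$ therefore only acquires the constant factor $\exp_p(c\,|W_n|)$, which cancels against the normalizer $Z_{n,\tilde h}$. Thus each finite-dimensional distribution $\mu_{\tilde h}^{(n)}$, and hence the measure $\mu_{\tilde h}$, is unaffected. Using $h(M^c)=-h(M)$ together with $\mathbf 1_M+\mathbf 1_{M^c}=(1,\dots,1)$ I then compute
\begin{equation*}
\tilde h^M-\tilde h^{M^c}=h(M)\mathbf 1_M+h(M)\mathbf 1_{M^c}=h(M)\bigl(\mathbf 1_M+\mathbf 1_{M^c}\bigr)=h(M)\,(1,\dots,1),
\end{equation*}
so the two fields differ by the scalar $c=h(M)$ times the all-ones vector. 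By the gauge invariance just established, $\mu_{h(M)\mathbf 1_M}=\mu_{h(M^c)\mathbf 1_{M^c}}$, and evaluating both sides on the cylinder determined by $\sigma_\Lambda$ yields (\ref{mu}).

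It remains to record why $h(M^c)=-h(M)$, i.e. $z(M^c)=1/z(M)$ for the corresponding solutions of (\ref{rm}). The key computation is that the reciprocal substitution $z\mapsto 1/z$ intertwines the two scalar fixed-point problems: writing the bracket of (\ref{rm}) for the parameter $q-m$ and substituting $z\mapsto 1/z$ (clearing denominators by multiplying numerator and denominator by $z$), one finds that its inner fraction at $1/z$ is exactly the reciprocal of the inner fraction of $f_m$ at $z$, whence $f_{q-m}(1/z)=1/f_m(z)$. Consequently $z\mapsto 1/z$ carries solutions of $z=f_m(z)$ to solutions of $w=f_{q-m}(w)$, and since $|z|_p=1$ gives $|1/z-1|_p=|z-1|_p$ this map preserves membership in $\mathcal E_p$; it therefore matches $z(M)$ with $z(M^c)$, and $\log_p$ gives $h(M^c)=-h(M)$. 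I expect the only genuinely delicate point to be the bookkeeping between the $q$-dimensional field $\tilde h$ and its $(q-1)$-dimensional reduction $h$ — keeping $q$ out of $M$ but in $M^c$ is precisely what turns $\mathbf 1_M+\mathbf 1_{M^c}$ into the all-ones vector and lets the gauge shift close the argument; the intertwining identity itself is routine once the substitution $z\mapsto 1/z$ is made.
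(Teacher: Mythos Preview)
Your argument is correct and is precisely the standard gauge--invariance proof: shifting all $q$ components of $\tilde h_x$ by the same scalar leaves each $\mu^{(n)}_{\tilde h}$ unchanged, and $h(M)\mathbf 1_M-h(M^c)\mathbf 1_{M^c}=h(M)(1,\dots,1)$ since $M$ and $M^c$ partition $\{1,\dots,q\}$. The paper does not give its own proof here but defers to \cite{KRK}, where exactly this shift argument (together with the symmetry $f_{q-m}(1/z)=1/f_m(z)$ that you verified) is used, so your approach coincides with the one the paper invokes.
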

Proof is similar to the proof of the Proposition 1 in \cite{KRK}.
The following is a corollary of Theorem \ref{tti} and Proposition \ref{tp}.
\begin{cor}\label{cor1} Let $\frac{k^2-k}{2}\not\equiv0(\operatorname{mod }p)$.
Then each TIpGMs corresponds to a solution of (\ref{rm})
with some $m\leq [q/2]$,
where $[a]$ is the integer part of $a$.
Moreover, for a given $m\leq [q/2]$,
a fixed solution to (\ref{rm}) generates ${q\choose m}$ vectors
${\tilde h}$ giving ${q\choose m}$ TIpGMs.
\end{cor}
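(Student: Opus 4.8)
The plan is to read off both assertions of the corollary from Theorem \ref{tti} combined with the reciprocity packaged into Proposition \ref{tp}. Under the hypothesis $\frac{k^2-k}{2}\not\equiv0\pmod p$, Theorem \ref{tti} tells us that \emph{every} solution $z\in\mathcal E_p^{q-1}$ of (\ref{pt1}), and hence (by the correspondence recorded after (\ref{pt1})) every TIpGM, is of the two-valued form (\ref{z_i=z}): it is determined by a subset $M$ of the spins and a solution $z^*$ of the reduced equation (\ref{rm}) with parameter $m=|M|$, the associated field being $h(M)\mathbf 1_M$. So the first task is organisational: show each such measure can be represented with $m\le[q/2]$.

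For the reduction I would argue as follows. If the representative coming from Theorem \ref{tti} has $m=|M|\le[q/2]$, there is nothing to do. If $m>[q/2]$, then $|M^c|=q-m\le[q/2]$, and Proposition \ref{tp} gives $\mu_{h(M)\mathbf 1_M}=\mu_{h(M^c)\mathbf 1_{M^c}}$ with $h(M^c)=-h(M)$. It then remains to check that $-h(M)$ really is the field of a genuine solution of (\ref{rm}) at parameter $q-m$, i.e. that $z(M^c)=1/z^*$ solves (\ref{rm}) with $m$ replaced by $q-m$. This is a short direct substitution into (\ref{rm}), together with the fact that $\mathcal E_p$ is a multiplicative group, so $1/z^*\in\mathcal E_p$ and the new representative is again valid. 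This yields the first assertion.

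For the counting assertion, fix $m\le[q/2]$ and a fixed nontrivial solution $z^*\ne1$ of (\ref{rm}), so $h(m)=\log_p z^*\ne0$. I would let $M$ range over all ${q\choose m}$ subsets of $\{1,\dots,q\}$ of cardinality $m$ and form the fields $h(m)\mathbf 1_M$. Each yields a solution of (\ref{pt1}) in $\mathcal E_p^{q-1}$ (special value $z^*$ on $M$ when $q\notin M$, special value $1/z^*$ on $M^c$ when $q\in M$, both lying in $\mathcal E_p$), hence a TIpGM. The heart of the matter is distinctness: since distinct solutions in $\mathcal E_p^{q-1}$ correspond to distinct measures, two of these fields coincide only if they produce the same reduced solution. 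Comparing the support and the common non-unit value of the reduced solutions, a coincidence would force $M=M'$ or $z^*=1/z^*$; the latter gives $(z^*)^2=1$, i.e. $z^*=\pm1$, which is impossible because $-1\notin\mathcal E_p$ for every $p$ and $z^*\ne1$ by nontriviality. Hence all ${q\choose m}$ measures are distinct.

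The main obstacle I anticipate is precisely this distinctness bookkeeping, and in particular the self-paired case $m=q/2$ (possible only for even $q$), where $M$ and $M^c$ have equal size. There one must use that the ${q\choose m}$ fields all carry the value $+h(m)$, whereas Proposition \ref{tp} identifies $h(m)\mathbf 1_M$ only with $-h(m)\mathbf 1_{M^c}$, so none of them collapse onto another. Keeping the two index sets carefully apart --- $\{1,\dots,q-1\}$ for the reduced coordinates of (\ref{pt1}) and $\{1,\dots,q\}$ for the full field underlying $\mathbf 1_M$ and $M^c$ --- is the one place where an error could creep in, but no deep new estimate is needed beyond Theorem \ref{tti}, Proposition \ref{tp}, and the group structure of $\mathcal E_p$.
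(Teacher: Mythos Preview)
Your proposal is correct and follows exactly the route the paper indicates: the paper gives no detailed argument for Corollary~\ref{cor1} beyond declaring it a consequence of Theorem~\ref{tti} and Proposition~\ref{tp}, and you have carefully unpacked precisely those two ingredients---the reduction to two-valued solutions and the $M\leftrightarrow M^c$ reciprocity---together with the easy check that $1/z^*$ solves (\ref{rm}) at parameter $q-m$. Your bookkeeping for distinctness (including the $m=q/2$ and $q\in M$ cases, and the observation that $-1\notin\mathcal E_p$) is more explicit than anything the paper writes out, but it is the intended argument.
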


\subsection{\bf Case $k=3$} In this section we consider the following
$$
f_m(z)=\left({(\theta+m-1)z+q-m\over mz+q-m-1+\theta}\right)^3
$$
From $\frac{f_m(z)-z}{z-1}=0$ we get
\begin{equation}\label{kubik}
z^3-Az^2-Bz+C=0,
\end{equation}
where
\begin{equation}\label{ABC}
\begin{array}{ll}
A=\frac{-3m^2(q-m)+3m(\theta-1)^2+(\theta-1)^3}{m^3},\\[3mm]
B=\frac{-3m(q-m)^2+3(q-m)(\theta-1)^2+(\theta-1)^3}{m^3},\\[3mm]
C=\frac{(q-m)^3}{m^3}.
\end{array}
\end{equation}

Note that $z=1$ is a solution to the equation (\ref{kubik}) if and
only if $\theta=1-q$ or $\theta=1+q/2$. Moreover,
 if $\theta=1-q$ then the equation (\ref{kubik}) has three solutions
 $z_1=z_2=1$ and
 $z_3=\left(\frac{m-q}{m}\right)^3$ in $\mathbb Q_p$. Note that
 $z_3\neq1$ as $m<q$. Recall that there exists
 a $p$-adic Gibbs measure $\mu$ corresponding to the vector
 $(\underbrace{z_3,\dots,z_3}_m,1,\dots,1)\in\mathbb Q_p^{q-1}$ if
 and only if $z_3$
 belongs to $\mathcal E_p$.

\begin{pro}\label{theta=1-q} Let $\theta=1-q$.\\
$A)$ Then $z_3\in\mathcal E_p$ if at least one of the following
conditions is satisfied\\
$(A.1)$ $p=2$ and $|q|_2<|2m|_2$;\\
$(A.2)$ $p\neq2$ and $|q|_p<|m|_p$;\\
$(A.3)$ $p\neq2$, $|q|_p=|m|_p$ and $x^2+x+1\equiv 0(\operatorname{mod} p)$,
where $x=1-q/m$;\\
$B)$ Otherwise, $z_3\notin\mathcal E_p$.
\end{pro}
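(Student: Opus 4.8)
The plan is to reduce everything to the explicit description of $\mathcal E_p$ together with one elementary observation about cubing units. Recall that for odd $p$ one has $\mathcal E_p=1+p\mathbb Z_p=\{z\in\mathbb Z_p^*:z\equiv1(\operatorname{mod}p)\}$, while for $p=2$ one has $\mathcal E_2=1+4\mathbb Z_2=\{z\in\mathbb Z_2^*:z\equiv1(\operatorname{mod}4)\}$ (these follow from $2^{-1/(2-1)}=\tfrac12$ and $p^{-1/(p-1)}\in(p^{-1},1)$ for $p>2$, since $|z-1|_p$ takes only the values $p^{-n}$). Writing $x=1-q/m=(m-q)/m$ so that $z_3=x^3$, the first step is to note that $z_3\in\mathcal E_p$ forces $z_3\in\mathbb Z_p^*$, i.e. $|x|_p=1$, equivalently $|m-q|_p=|m|_p$. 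The key fact I would isolate is that for a unit $x\in\mathbb Z_p^*$ one has $x^3\in\mathcal E_p\Leftrightarrow x^3\equiv1(\operatorname{mod}p)$ for odd $p$, and $x^3\in\mathcal E_2\Leftrightarrow x^3\equiv1(\operatorname{mod}4)$ for $p=2$; the latter simplifies further, since $1^3\equiv1$ and $3^3\equiv3(\operatorname{mod}4)$, giving $x^3\in\mathcal E_2\Leftrightarrow x\in\mathcal E_2$.

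Next I would split on the comparison of $|q|_p$ and $|m|_p$ for odd $p$. If $|q|_p>|m|_p$, then by the strong triangle inequality $|x|_p=|q|_p/|m|_p>1$, so $z_3\notin\mathbb Z_p$ and hence $z_3\notin\mathcal E_p$. If $|q|_p<|m|_p$, then $|x-1|_p=|q/m|_p<1$, so $x\equiv1(\operatorname{mod}p)$ and therefore $z_3=x^3\equiv1(\operatorname{mod}p)$ with $|z_3|_p=1$; this is case $(A.2)$ and yields $z_3\in\mathcal E_p$. The remaining and most delicate subcase is $|q|_p=|m|_p$, where $q/m$ is a unit and $x\in\mathbb Z_p$. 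Here $x^3\equiv1(\operatorname{mod}p)$ factors as $(x-1)(x^2+x+1)\equiv0(\operatorname{mod}p)$; the factor $x-1\equiv0$ would give $q/m\equiv0(\operatorname{mod}p)$, contradicting $|q|_p=|m|_p$, so it is excluded. Thus $z_3\in\mathcal E_p$ exactly when $x^2+x+1\equiv0(\operatorname{mod}p)$ (which in particular forces $x\not\equiv0$, i.e. $x$ a unit), recovering $(A.3)$ and, by negation, the complementary nonmembership.

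For $p=2$ I would use the reduction $z_3\in\mathcal E_2\Leftrightarrow x\in\mathcal E_2$ and compare $|q|_2$ with $|2m|_2=\tfrac12|m|_2$. When $|q|_2<|2m|_2$ one gets $|x-1|_2=|q/m|_2\le\tfrac14$, so $x\in\mathcal E_2$ and hence $z_3\in\mathcal E_2$, which is $(A.1)$. In the complementary range $|q|_2\ge\tfrac12|m|_2$ exactly three things can occur: if $|q|_2>|m|_2$ then $|x|_2>1$ and $z_3\notin\mathbb Z_2$; if $|q|_2=|m|_2$ then $q/m$ is a unit, hence odd, so $x=1-q/m\equiv0(\operatorname{mod}2)$ fails to be a unit; and if $|q|_2=\tfrac12|m|_2$ then $|x-1|_2=\tfrac12$, i.e. $x\equiv3(\operatorname{mod}4)$, so $x\notin\mathcal E_2$. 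In each of these $z_3\notin\mathcal E_2$, completing part $B)$.

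The main obstacle is the borderline arithmetic in the equal-valuation cases: for odd $p$, correctly discarding the spurious root $x\equiv1$ of $x^3\equiv1(\operatorname{mod}p)$ by invoking $|q|_p=|m|_p$, so that the criterion collapses precisely to $x^2+x+1\equiv0(\operatorname{mod}p)$; and for $p=2$, the precise thresholding at $|2m|_2$ together with the verification that cubing fixes the residue class of an element of $3+4\mathbb Z_2$ and hence cannot push it into $\mathcal E_2$. Everything else is a routine application of the ultrametric norm property.
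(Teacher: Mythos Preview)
Your argument is correct and follows essentially the same route as the paper: both proofs rest on writing $z_3-1=x^3-1=(x-1)(x^2+x+1)$ with $x=1-q/m$ (the paper writes the second factor as $2-\tfrac{q}{m}+(1-\tfrac{q}{m})^2$) and then doing a case analysis on the relation between $|q|_p$ and $|m|_p$. Your reformulation of the $p=2$ case via the observation that cubing fixes residues modulo $4$ is a tidy repackaging of the same computation, and your treatment of the borderline $|q|_p=|m|_p$ case for odd $p$ (ruling out the factor $x-1$ and checking that $x^2+x+1\equiv0$ forces $x$ to be a unit) is exactly what the paper does, only spelled out a bit more carefully.
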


\begin{proof} We must check $\left|z_3-1\right|_p<p^{-1/(p-1)}$ which is
equivalent to
\begin{equation}\label{q/m1}
\left|\frac{q}{m}\left(2-\frac{q}{m}+\left(1-\frac{q}{m}\right)^2\right)
\right|_p<\frac{1}{2},\qquad\mbox{if }\ p=2
\end{equation}
and
\begin{equation}\label{q/m2}
\left|\frac{q}{m}\left(2-\frac{q}{m}+\left(1-\frac{q}{m}\right)^2\right)
\right|_p<1,\qquad\mbox{if }\ p\neq2
\end{equation}

Let $p=2$. It is easy to see that $|z_3-1|_2<\frac{1}{2}$ if $|q|_2<|2m|$.
If $|q|_2\geq|2m|_2$ from (\ref{q/m1})
 we get
 $$
 \left|\frac{q}{m}\left(2-\frac{q}{m}+\left(1-\frac{q}{m}\right)^2\right)
 \right|_p\geq\frac{1}{2}.
 $$
 This means that $z_3\notin\mathcal E_2$.

Let $p\neq2$. If $|q|_p<|m|_p$ then by $p$-adic norm's
property we have (\ref{q/m2}).
One can see that (\ref{q/m2}) is not satisfied if $|q|_p>|m|_p$.
Assume that $|q|_p=|m|_p$.
Denote $x=1-q/m$.
We have
$$
\left|\frac{q}{m}\left(2-\frac{q}{m}+\left(1-\frac{q}{m}\right)^2\right)
\right|_p=\left|1+x+x^2\right|_p
$$
Consequently, in this case the condition (\ref{q/m2}) is satisfied
if and only if
$x^2+x+1\equiv 0(\operatorname{mod} p)$.
 \end{proof}

By Proposition \ref{theta=1-q} and Corollary \ref{cor1}
we get the following

\begin{thm} Let $\theta=1-q$. Then $\left|\mathcal G(H)
\right|\geq1+{q\choose m}$
if one of the following
conditions is satisfied\\
$a)$ $p=2$ and $|q|_2<|2m|_2$;\\
$b)$ $p\neq2$ and $|q|_p<|m|_p$;\\
$c)$ $p\neq2$, $|q|_p=|m|_p$ and $x^2+x+1\equiv 0(\operatorname{mod} p)$,
where $x=1-q/m$.
\end{thm}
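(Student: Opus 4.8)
The plan is to exhibit, under any one of the hypotheses $a)$, $b)$, $c)$, a family of $1+{q\choose m}$ pairwise distinct translation-invariant $p$-adic Gibbs measures: the summand $1$ coming from the constant solution of (\ref{pt1}) and the summand ${q\choose m}$ from the nontrivial root $z_3$ of the reduced equation (\ref{kubik}). As noted in the proof of the first theorem of this section, the constant vector $z=(1,\dots,1)$ lies in $\mathcal E_p^{q-1}$ and solves (\ref{pt1}); by the admissibility criterion stated just after (\ref{pt1}) it generates a TIpGM $\mu_0$, which accounts for the term $1$.

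Next I produce the nontrivial root. Because $\theta=1-q$, equation (\ref{kubik}) has the double root $z=1$ together with the simple root $z_3=\left(\frac{m-q}{m}\right)^3$, and $z_3\neq1$ since $m<q$; thus $z_3$ is a genuinely new solution of (\ref{rm}). The three hypotheses $a)$, $b)$, $c)$ are verbatim the conditions $(A.1)$, $(A.2)$, $(A.3)$ of Proposition \ref{theta=1-q}, so under each of them that proposition gives $z_3\in\mathcal E_p$. Hence the vector $(\underbrace{z_3,\dots,z_3}_m,1,\dots,1)$ belongs to $\mathcal E_p^{q-1}$ and is admissible, i.e. it does generate a TIpGM.

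To finish I count. By the counting assertion of Corollary \ref{cor1}, the single admissible solution $z_3$ of (\ref{rm}) with parameter $m$ yields ${q\choose m}$ distinct vectors $\tilde h$, obtained by distributing the marked weight $z_3$ over the ${q\choose m}$ subsets $M\subset\{1,\dots,q\}$ of size $m$, and hence ${q\choose m}$ distinct TIpGMs; the symmetry $M\leftrightarrow M^c$ of Proposition \ref{tp} merely identifies the data for $m$ with that for $q-m$ and leaves ${q\choose m}={q\choose q-m}$ unchanged. Since $z_3\neq1$, none of these measures coincides with $\mu_0$, so $\mathcal G(H)$ contains at least $1+{q\choose m}$ elements, which is the claim.

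The one point needing care is the pairwise distinctness built into Corollary \ref{cor1}: distinct placements of $z_3$ must give distinct measures and not merely distinct vectors $\tilde h$. This is exactly what that corollary supplies; moreover its counting part does not actually invoke the coprimality hypothesis $\left(\frac{k^2-k}{2},p\right)=1$ (only the classification direction does), so the lower bound remains valid even in the residue class $p=3$ otherwise excluded when $k=3$. The remaining verifications -- that $z_3$ solves (\ref{kubik}) and that $a)$--$c)$ match $(A.1)$--$(A.3)$ -- are immediate.
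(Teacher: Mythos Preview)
Your argument is correct and follows the same route as the paper, which derives the theorem in one line from Proposition~\ref{theta=1-q} together with Corollary~\ref{cor1}; you have merely unpacked those two citations and added the useful remark that only the counting direction of Corollary~\ref{cor1} is invoked, so the coprimality hypothesis (which would exclude $p=3$ when $k=3$) is not actually needed for the lower bound.
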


Now we shall consider the case $\theta=1+q/2$. Dividing the equation
(\ref{kubik}) to $z-1$ we obtain
\begin{equation}\label{kv}
8m^3z^2-\left(16m^3-24m^2q+6mq^2+q^3\right)z-8(q-m)^3=0.
\end{equation}

Denote $D=q^2+12mq-12m^2$ and $f(z)$ is a quadratic function (\ref{kv}).
Note that $D\neq0$. So, the equation (\ref{kv})
has two distinct solutions $z_2$ and $z_3$
in $\mathbb Q_p$ if $\sqrt{D}$ exists. It easy to see that $z_2=1$
and $z_3=-1$ are
solutions of (\ref{kv}) if $q=2m$. One can see that
$z_3\notin\mathcal E_p$.
We assume that $q\neq2m$ to
find a solution of (\ref{kv})
belongs to $\mathcal E_p\setminus\{1\}$.

\begin{pro}\label{theta=1+q}
Let $\theta=1+q/2$. Then it holds the following statements:\\
$A)$ The equation (\ref{kv}) has two solutions in $\mathcal E_p$
if $|m|_p>|q|_p$ and $\sqrt{-3}$ exists in $\mathbb Q_p$;\\
$B)$ The equation (\ref{kv}) has a unique solution in $\mathcal E_p$
if one of the following conditions is satisfied\\
$(B.1)$ $p=2$ and $\left|\frac{q}{2m}-1\right|_p<\frac{1}{4}$;\\
$(B.2)$ $p=3$ and $|m|_p\leq|q|_p$;\\
$(B.3)$ $p>3$, $|m|_p=|q|_p$ and $|q-2m|_p<|q|_p$;\\
$C)$ Otherwise the equation (\ref{kv}) has no solution in $\mathcal E_p$.
\end{pro}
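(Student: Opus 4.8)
The plan is to convert membership in $\mathcal E_p$ into a size estimate on a shifted variable. Substituting $z=1+w$ into (\ref{kv}) and simplifying produces the quadratic
\[
8m^3w^2-q\bigl(q^2+6mq-24m^2\bigr)w+9q^2(2m-q)=0,
\]
whose discriminant one computes to be exactly $q^4D$ with $D=q^2+12mq-12m^2$. Hence its two roots are
\[
w_{\pm}=\frac{q\bigl[(q^2+6mq-24m^2)\pm q\sqrt D\bigr]}{16m^3},
\]
they lie in $\mathbb Q_p$ precisely when $\sqrt D\in\mathbb Q_p$, and $z=1+w\in\mathcal E_p$ iff $|w|_p<p^{-1/(p-1)}$. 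For odd $p$ this last condition is equivalent to $|w|_p<1$, while for $p=2$ it means $|w|_2\le 1/4$; this already accounts for the sharper bound $1/4$ in (B.1). The remaining work is to estimate $|w_{\pm}|_p$ by the ultrametric inequality, organized by the three regimes $|m|_p>|q|_p$, $|m|_p=|q|_p$, $|m|_p<|q|_p$ and by the primes $p=2$, $p=3$, $p>3$.

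When $|m|_p>|q|_p$ the term $-24m^2$ dominates both $q^2+6mq-24m^2$ and $q\sqrt D$, so $|w_+|_p=|w_-|_p=|q|_p/|m|_p<1$ and both roots land in $\mathcal E_p$ as soon as they exist. Here $D$ equals $-12m^2$ times a unit, and since $-12=-3\cdot 2^2$, existence of $\sqrt D$ is equivalent by Theorem \ref{tx3} to existence of $\sqrt{-3}$ (which in particular forces $p$ odd); this is exactly case (A), and its failure contributes to (C). In the other regimes I avoid tracking the sign $\pm$ and instead apply Vieta's formulas,
\[
w_1w_2=\frac{9q^2(2m-q)}{8m^3},\qquad
w_1+w_2=\frac{q(q^2+6mq-24m^2)}{8m^3},
\]
so that the Newton polygon read off from $|w_1w_2|_p$ and $|w_1+w_2|_p$ determines both valuations, and counting how many exceed the $\mathcal E_p$-threshold gives the dichotomy "one solution / no solution." For $p>3$ with $|m|_p=|q|_p$, the hypothesis $|q-2m|_p<|q|_p$ forces $|w_1w_2|_p<1$ while $|w_1+w_2|_p=1$, so exactly one root is small; this is (B.3), and the distinctness of the two valuations makes $\sqrt D$ automatically rational, explaining why no $\sqrt{-3}$-hypothesis appears there. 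When that closeness fails one finds both valuations equal to $0$, giving no solution, which is the $p>3$ part of (C).

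The primes $2$ and $3$ are precisely where the coefficients $8=2^3$, $9=3^2$ (and $6,12,24$) stop being units and shift the Newton polygon. For $p=3$ the factor $|9|_3=1/9$ raises $v_3(w_1w_2)$ by two, which is what upgrades the "no solution" outcome of $|m|_p<|q|_p$ (valid for $p>3$) into the "one solution" statement (B.2) valid for all $|m|_3\le|q|_3$; I would split the subcases $|m|_3<|q|_3$ and $|m|_3=|q|_3$ and check separately that $|m|_3>|q|_3$ makes $v_3(D)$ odd, so Theorem \ref{tx3} gives no root at all. The genuinely delicate step, and the main obstacle, is $p=2$: here $\mathcal E_2$ demands the stronger bound $v_2(w)\ge 2$, and the combination $|8|_2=1/8$, $|16|_2=1/16$, $|24|_2=1/8$ makes the valuation bookkeeping sensitive to the value of $q-2m$ modulo high powers of $2$. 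This is why (B.1) is phrased through $|q/(2m)-1|_2<1/4$ rather than a bare comparison of $|m|_2$ and $|q|_2$; the plan is to show that exactly this closeness of $q$ to $2m$ pushes $v_2(w_1w_2)$ past the threshold for one root while keeping the second of valuation $0$. Assembling the complementary conditions over all regimes and primes then yields (C), completing the trichotomy.
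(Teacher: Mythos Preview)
Your substitution $z=1+w$ together with Vieta/Newton-polygon bookkeeping is essentially the same device the paper uses: the paper's computations of $|f(1)|_p$ and $|f'(1)|_p$ are exactly the constant and linear coefficients of your shifted quadratic, and the paper's appeals to Hensel's lemma play the same role as your Newton-polygon separation of the two valuations. So the strategy is not genuinely different, only repackaged.

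There is, however, a concrete misplacement in your case analysis. You assert that in the regime $|m|_p>|q|_p$ one has $D=-12m^2\cdot(\text{unit})$, hence $\sqrt{D}\in\mathbb Q_p\iff\sqrt{-3}\in\mathbb Q_p$, and that when this holds \emph{both} roots land in $\mathcal E_p$ (case~(A)), with failure contributing to~(C). This is false for $p=2$. The condition in (B.1), namely $|q/(2m)-1|_2<1/4$, forces $|q/(2m)|_2=1$, i.e.\ $|q|_2=|2m|_2<|m|_2$, so (B.1) lives \emph{inside} the regime $|m|_2>|q|_2$, not elsewhere. In that regime the paper rewrites $D=16m^2\bigl(1+2\alpha+\alpha^2/4\bigr)$ with $\alpha=q/(2m)-1$; for $p=2$ the factor in parentheses is a square in $\mathbb Q_2$ iff $|\alpha|_2<1/4$, which has nothing to do with $\sqrt{-3}$ (indeed $\sqrt{-3}\notin\mathbb Q_2$). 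And when $\sqrt{D}$ does exist, the paper shows via $z_2z_3=(1-q/m)^3\equiv 7\pmod 8$ that exactly one of the two roots is $\equiv 1\pmod 4$, so only one lies in $\mathcal E_2$, not both. Your later paragraph on $p=2$ gestures at this (``keeping the second of valuation~$0$''), but it directly contradicts your earlier blanket claim for $|m|_p>|q|_p$; as written the two passages are inconsistent and neither one actually carries out the $2$-adic estimate. Fixing this means treating $p=2$ separately already in the $|m|_p>|q|_p$ case, exactly as the paper does.
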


\begin{proof}
{\bf Case $|m|_p<|q|_p$}. If $p\neq3$ then by $p$-adic norm's property
one finds
$$
|f(1)|_p=\left|9q^2(q-2m)\right|_p=
\left|q^3\right|_p>0.
$$
This means that if $p\neq3$ than (\ref{kv}) has no solution
in $\mathcal E_p$.

Let $p=3$. From $f^\prime(1)=24m^2q-6mq^2-q^3$ we get
$\left|f^\prime(1)\right|_p=\left|q^3\right|_p$.
Thus, we show that for the function $q^{-3}f(z)$ all
conditions of
Hensel's lemma are satisfied. By Hensel's lemma the
equation (\ref{kv})
has a unique solution in $\mathcal E_3$.

{\bf Case $|m|_p>|q|_p$}. In this case we get
$z_2, z_3\in\mathbb Z_p^*$ if
$\sqrt{D}$ exists. At first, we assume that $p=2$. Then we have
$$
D=16m^2\left(1+2\alpha+\frac{\alpha^2}{4}\right),\ \mbox{where }\ \alpha=\frac{q}{2m}-1.
$$
Note that $|\alpha|_2\leq1$. Then by Theorem \ref{tx3}
a number $\sqrt{D}$ exists in $Q_2$ iff
$\left|\alpha\right|_2<\frac{1}{4}$.
Assume that $|\alpha|_2<\frac{1}{4}$. We obtain
\begin{equation}\label{z2z3}
z_2\cdot z_3=\left(\frac{m-q}{m}\right)^3=\left(-1-2\alpha\right)^3
=1+2+4+\cdots
\end{equation}
Hence, one can find $z_2\notin\mathcal E_2$ or $z_3\notin\mathcal E_2$.
Assume that $z_2\notin\mathcal E_2$, i.e. $z_2\equiv3(\operatorname{mod }4)$.
Then from (\ref{z2z3}) we have $z_3\equiv1(\operatorname{mod }4)$, which is
equivalent to $z_3\in\mathcal E_2$.

Let $p\neq2$.
Then we have
$$
D=4m^2\left(-3+6\beta+\beta^2\right),\ \mbox{where }\ \beta=\frac{q}{2m}.
$$
Then by Theorem \ref{tx3} existence of $\sqrt{D}$ is
equivalent to the existence of $\sqrt{-3}$. Assume that
$\sqrt{-3}\in\mathbb Q_p$.
Then the equation (\ref{kv}) has two
solutions $z_2$ and $z_3$ in
$\mathbb Z_p^*$. We get the followings
$$
z_2+z_3\equiv2(\operatorname{mod} p)\qquad\mbox{and }
\qquad z_2\cdot z_3\equiv1(\operatorname{mod }p).
$$
Hence, we obtain $z_i^2+1\equiv2z_i(\operatorname{mod} p), i=2,3$
which is equivalent to
$z_i\equiv1(\operatorname{mod }p)$.
This means $z_2, z_3\in\mathcal E_p$.

{\bf Case $|m|_p=|q|_p$}. Assume that $|q-m|_p<|q|_p$. Note
that this inequality automatically
executed when $p=2$. By $p$-adic norm's property we have
$$
|f(1)|_p=
\left|9q^2(q-2m)\right|_p=
\left|q^3\right|_p>0,\ \mbox{if } p\neq3.
$$
Hence, we conclude that (\ref{kv}) has no solution in
$\mathcal E_p$ for $p\neq3$.
Note that if $|q-m|_3<|q|_3$ then by Hensel's lemma the
equation (\ref{kv}) has a unique solution in $\mathcal E_3$.

Let $|q-2m|_p<|q|_p$. In this case we have $\left|q^{-3}f(1)\right|_p<1$
and $\left|q^{-3}f^\prime(1)\right|_p=1$. Then by Hensel's lemma (\ref{kv})
has a unique solution in $\mathcal E_p$.

Let $|q-m|_p=|q-2m|_p=|q|_p$. Since $|q|_p=|m|_p$ we get $p>3$.
Then we have $|f(1)|_p=\left|q^3\right|_p>0$. This means that (\ref{kv})
has no solution in $\mathcal E_p$.
\end{proof}

By Proposition \ref{theta=1+q} and Corollary \ref{cor1} we get the following

\begin{thm}
Let $\theta=1+q/2$. Then $\left|\mathcal G(H)
\right|\geq1+{q\choose m}$ if one of the following
conditions is satisfied\\
$a)$ $p=2$ and $\left|\frac{q}{2m}-1\right|_p<\frac{1}{4}$;\\
$b)$ $p=3$ and $|m|_p\leq|q|_p$;\\
$c)$ $p>3$, $|m|_p=|q|_p$ and $|q-2m|_p<|q|_p$;\\
$d)$ $|m|_p>|q|_p$ and $\sqrt{-3}$ exists in $\mathbb Q_p$;
\end{thm}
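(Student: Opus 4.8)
The plan is to obtain the bound as a direct combination of Proposition \ref{theta=1+q} and Corollary \ref{cor1}, after matching the four hypotheses $a)$--$d)$ to the four existence cases of Proposition \ref{theta=1+q}: condition $a)$ is exactly $(B.1)$, condition $b)$ is $(B.2)$, condition $c)$ is $(B.3)$, and condition $d)$ is part $A)$. Throughout I retain the standing assumption $q\neq 2m$ of this subsection; it forces $m<q/2$ and guarantees that every root of (\ref{kv}) lying in $\mathcal E_p$ is different from $1$, since the spurious roots $z_2=1$, $z_3=-1$ appear only when $q=2m$.

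First I would apply Proposition \ref{theta=1+q} to produce, in each case, at least one solution $z^*\in\mathcal E_p\setminus\{1\}$ of (\ref{kv}); because (\ref{kv}) arises from $\frac{f_m(z)-z}{z-1}=0$ by clearing the factor $z-1$, this $z^*$ is also a solution of (\ref{rm}). Then, by the first theorem of this section, which via (\ref{z_i=z}) lifts a solution of (\ref{rm}) to a solution of (\ref{pt1}), the vector $z(M)=(\underbrace{z^*,\dots,z^*}_m,1,\dots,1)$ solves the system (\ref{pt1}) and lies in $\mathcal E_p^{q-1}$, hence corresponds to a genuine TIpGM $\mu_{h(M)\mathbf 1_M}$. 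In case $d)$ Proposition \ref{theta=1+q} even supplies two such solutions, but a single one already suffices for the claimed lower bound.

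It remains to count. By Corollary \ref{cor1}, for the fixed $m\le[q/2]$ the one solution $z^*$ generates ${q\choose m}$ vectors $\tilde h$, hence ${q\choose m}$ distinct TIpGMs; here the strict inequality $m<q/2$ is what prevents the complementation symmetry of Proposition \ref{tp} (which pairs a size-$m$ set only with a size-$(q-m)$ set) from collapsing two size-$m$ sets, so these ${q\choose m}$ measures are genuinely pairwise distinct. Adjoining the measure $\mu_0$ coming from the constant solution $z=(1,\dots,1)$, which is distinct from all of them because $z^*\neq1$, yields $|\mathcal G(H)|\ge 1+{q\choose m}$.

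The delicate point, and the main obstacle, is case $b)$ with $p=3$: there $\frac{k^2-k}{2}=3\equiv 0\pmod p$, so the hypothesis of Corollary \ref{cor1} (inherited from Theorem \ref{tti}) fails and the corollary cannot be quoted verbatim. The resolution is that the corollary's \emph{classification} half (that every solution has the shape (\ref{z_i=z})) is needed only for upper bounds, whereas the present lower bound uses solely the \emph{generation-and-distinctness} half, which depends only on $z^*\neq1$ and $m<q/2$ and is insensitive to the divisibility hypothesis. I would therefore carry out the distinctness count of the previous paragraph directly when $p=3$, and run the identical argument otherwise.
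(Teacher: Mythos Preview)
Your argument is essentially the paper's own: the paper's entire proof is the single sentence ``By Proposition~\ref{theta=1+q} and Corollary~\ref{cor1} we get the following,'' and you are simply unpacking that citation into an explicit existence-and-counting argument.

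Where you go beyond the paper is in flagging, correctly, that for $k=3$ and $p=3$ one has $\tfrac{k^2-k}{2}=3\equiv 0\pmod p$, so Corollary~\ref{cor1} cannot be invoked as stated in case~$b)$. The paper glosses over this. Your proposed repair---that only the ``generation'' half of the corollary is needed for a lower bound, and that this half (Theorem~5 plus the counting of subsets $M$ of size $m$) does not rely on the divisibility hypothesis---is the right observation and closes the gap the paper leaves open.
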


Now we assume that $\theta\notin\{1-q, 1+q/2\}$ and consider the
following equation
\begin{equation}\label{kubik2}
x^3+\alpha x+\beta=0,
\end{equation}
which is obtained after the change of variable $z=x+A/3$.
Here
\begin{equation}\label{alpha}
\alpha=-\left(B+\frac{A^2}{3}\right),\qquad \beta=
-\frac{2}{27}A^3-\frac{AB}{3}+C.
\end{equation}

Note that if $|\theta-1|_p\neq|q|_p$ and
$|m|_p<\max\{|\theta-1|_p,|q|_p\}$ then the equation
(\ref{kubik}) has no solution in $\mathcal E_p$. So, we assume that
$|m|_p>\max\{|\theta-1|_p,|q|_p\}$ to
find nontrivial
$p$-adic Gibbs measures.
Under this condition, by $p$-adic norm's property
from (\ref{ABC})
and (\ref{alpha}) one finds $|\alpha|_p=
\left(\frac{|\theta-1|_p}{|m|_p}\right)^2$ and
$|\beta|_p=\left(\frac{|\theta-1|_p}{|m|_p}\right)^3$ if $p>3$.
\begin{lemma}\label{z,x} Let $q\in p\mathbb N$ and $|m|_p>
\max\{|\theta-1|_p,|q|_p\}$.
Then a solution $z$ of (\ref{kubik}) belongs to $\mathcal E_p$ iff
the corresponding solution $x=z-A/3$ of (\ref{kubik2}) belongs to
$\mathbb Z_p\setminus\mathbb Z_p^*$.
\end{lemma}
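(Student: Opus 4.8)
The plan is to read $x = z - A/3$ as an affine change of coordinate and to reduce the two-sided statement to a single size estimate on the constant $A/3$. The point is that $z \in \mathcal E_p$ records the distance of $z$ from $1$, while $x \in \mathbb Z_p \setminus \mathbb Z_p^*$ records the distance of $x$ from $0$; since $z-1$ and $x$ differ by the fixed constant $A/3 - 1$, the whole lemma collapses once I know that $A/3$ is itself close to $1$. So I would first compute, dividing the numerator of $A$ in (\ref{ABC}) by $3m^3$,
\[
\frac{A}{3} - 1 = -\frac{q}{m} + \frac{(\theta-1)^2}{m^2} + \frac{(\theta-1)^3}{3m^3}.
\]

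Next I would bound the right-hand side term by term using the hypothesis $|m|_p > \max\{|\theta-1|_p, |q|_p\}$. This forces $|\theta-1|_p/|m|_p \le p^{-1}$ and $|q|_p/|m|_p \le p^{-1}$, so the first two terms have norm at most $p^{-1}$ and $p^{-2}$ respectively, while the third has norm $|3|_p^{-1}(|\theta-1|_p/|m|_p)^3$. For $p>3$ the factor $|3|_p^{-1}$ equals $1$ and this term is at most $p^{-3}$; for $p=3$ it equals $3$, but then $(|\theta-1|_3/|m|_3)^3 \le 3^{-3}$ absorbs it to give at most $3^{-2}$. In every case $p \ge 3$ the strong triangle inequality yields $|A/3 - 1|_p \le p^{-1} < 1$.

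The last step is a clean ultrametric comparison. For $p>2$ the number $p^{-1/(p-1)}$ lies strictly between $p^{-1}$ and $1$, so among the attainable values $\{p^n\}\cup\{0\}$ of $|z-1|_p$ those below $p^{-1/(p-1)}$ are exactly those at most $p^{-1}$; hence $\mathcal E_p = \{z : |z-1|_p < 1\}$ (and $|z-1|_p<1$ already forces $|z|_p=1$). Writing $z-1 = x + (A/3 - 1)$ with $|A/3-1|_p < 1$, the strong triangle inequality gives $|z-1|_p < 1 \Longleftrightarrow |x|_p < 1$, which is precisely $z \in \mathcal E_p \Longleftrightarrow x \in \mathbb Z_p \setminus \mathbb Z_p^*$, as desired.

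I expect the only delicate points to be bookkeeping rather than substance. The prime $p=3$ is the one to watch in the estimate, because the $1/3$ in the cubic term could a priori inflate its norm; the hypothesis $|m|_p > |\theta-1|_p$, which gives $|\theta-1|_3/|m|_3 \le 1/3$, is exactly what tames it. The genuinely different case is $p=2$, where $p^{-1/(p-1)} = 1/2 = p^{-1}$ so the two thresholds no longer collapse to ``$<1$''; here $|q/m|_2$ can equal $1/2$ and the equivalence can fail, so—consistent with the surrounding estimates on $\alpha,\beta$ being stated for $p>3$—I would regard $p=2$ as outside the intended scope and, if it had to be included, replace the crude bound by an explicit congruence analysis modulo $4$.
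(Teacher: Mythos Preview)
Your argument is correct and is essentially the same as the paper's: compute $A/3-1$ explicitly, use the hypothesis $|m|_p>\max\{|\theta-1|_p,|q|_p\}$ together with the strong triangle inequality to get $|A/3-1|_p<1$, and then deduce $|z-1|_p<1\Longleftrightarrow |x|_p<1$ from $z-1=x+(A/3-1)$. Your treatment is in fact slightly more careful than the paper's in spelling out the $p=3$ term-by-term estimate and the reason why $\mathcal E_p=\{z:|z-1|_p<1\}$ for $p>2$, and your caveat about $p=2$ is appropriate since the paper's own proof tacitly uses the $p>2$ identification.
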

\begin{proof} Since $|m|_p>\max\{|\theta-1|_p,|q|_p\}$ we obtain
$$\left|\frac{A}{3}-1\right|_p=\frac{\left|(\theta-1)^3+
3(\theta-1)^2m-3m^2q\right|_p}{\left|3m^3\right|_p}
\leq\frac{\max\{\left|(\theta-1)^2\right|_p,|mq|_p\}}
{\left|m^2\right|_p}<1.$$
Hence, by $p$-adic norm's property it hold true the
following inequality
$$|z-1|_p=\left|x+\frac{A}{3}-1\right|_p<1$$
if and only if $|x|_p<1$.
\end{proof}
\begin{pro}\label{thetay1-q} Let $p>3$ and $\theta\notin\{1-q, 1+q/2\}$.
Then for a given $m=1,2,\dots,[q/2]$ the equation (\ref{kubik2})
has three solutions in $\mathbb Z_p\setminus\mathbb Z_p^*$ if
$\max\left\{\left|\frac{\theta-1}{m}\right|_p,\left|\frac{q}{m}\right|_p\right\}\leq p^{-3}$.
\end{pro}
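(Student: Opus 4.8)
The plan is to linearise the cubic (\ref{kubik}) by using that $f_m$ is literally a cube: $f_m(z)=g(z)^3$ with the M\"obius map $g(z)=\dfrac{(\theta+m-1)z+(q-m)}{mz+(q-m-1+\theta)}$. Putting $W=g(z)$, the equation $z=f_m(z)$ splits into $z=W^{3}$ together with $W=g(W^{3})$. Clearing denominators in the second relation gives the quartic $mW^{4}-(m+\theta-1)W^{3}+(q-m+\theta-1)W-(q-m)=0$, which has $W=1$ (that is, $z=1$) as a root; dividing it out leaves
\begin{equation*}
mW^{3}-(\theta-1)W^{2}-(\theta-1)W+(q-m)=0. \tag{$\star$}
\end{equation*}
First I would verify that $z\mapsto W=g(z)$ and $W\mapsto z=W^{3}$ set up a bijection, defined over $\mathbb{Q}_p$, between the roots $z\neq1$ of (\ref{kubik}) and the roots of $(\star)$. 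The two excluded values of $\theta$ are exactly those making a cube root of unity solve $(\star)$: one computes $(\star)|_{W=1}=q-2(\theta-1)$ and $(\star)|_{W=\omega}=q+(\theta-1)$ when $\omega^{2}+\omega=-1$, which vanish precisely for $\theta=1+q/2$ and $\theta=1-q$. Hence for $\theta\notin\{1-q,1+q/2\}$ no root $W$ of $(\star)$ has $W^{3}=1$, so the inverse map never returns the forbidden value $z=1$, and (\ref{kubik}) has exactly as many $\mathbb{Q}_p$-roots as $(\star)$. This turns the problem into counting the $\mathbb{Q}_p$-roots of the far simpler cubic $(\star)$.

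The second step is a reduction of $(\star)$ modulo $p$. Dividing by $m$ rewrites it as $W^{3}-\frac{\theta-1}{m}W^{2}-\frac{\theta-1}{m}W+\frac{q-m}{m}=0$, and the hypothesis $\max\{|\frac{\theta-1}{m}|_p,|\frac{q}{m}|_p\}\le p^{-3}$ kills the two middle coefficients mod $p$ and makes $\frac{q-m}{m}\equiv-1$, so $(\star)$ reduces to $W^{3}-1\equiv0\pmod p$. Since $p>3$ this polynomial is separable over $\mathbb{F}_p$ (its derivative $3W^{2}$ has no common root with it), so each of its roots is simple; Hensel's lemma then lifts each root of $W^{3}-1$ in $\mathbb{F}_p$ to a unique root of $(\star)$ in $\mathbb{Z}_p^{*}$. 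The three cube roots of unity therefore give three roots $W_1,W_2,W_3\in\mathbb{Z}_p^{*}$ of $(\star)$, whence three roots $z_j=W_j^{3}$ of (\ref{kubik}); as $\overline{z_j}=\overline{W_j}^{\,3}=1$ in $\mathbb{F}_p$ and $p>3$, each $z_j$ lies in $\mathcal{E}_p$.

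Finally I would feed this back into (\ref{kubik2}). The inequality $\max\{|\frac{\theta-1}{m}|_p,|\frac{q}{m}|_p\}\le p^{-3}$ gives in particular $|m|_p>\max\{|\theta-1|_p,|q|_p\}$, so Lemma~\ref{z,x} applies and sends each root $z_j\in\mathcal{E}_p$ of (\ref{kubik}) to a root $x_j=z_j-A/3\in\mathbb{Z}_p\setminus\mathbb{Z}_p^{*}$ of (\ref{kubik2}); this produces the three asserted solutions. I expect the main obstacle to be the first step's bookkeeping: carefully checking that the cubic factor $(\star)$ is exactly what remains after removing $W=1$, that the correspondence $z\leftrightarrow W$ is a genuine bijection on $\mathbb{Q}_p$-points (so that roots are neither lost at the pole of $g$ nor created at $z=1$), and that the reduction of $(\star)$ is honestly $W^{3}-1$ with three simple residues --- for it is this separable reduction, rather than any discriminant computation, that delivers three liftable roots and hence the three solutions.
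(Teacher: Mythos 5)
Your reduction machinery is essentially sound: the factorization $f_m(z)=g(z)^3$, the quartic in $W=g(z)$, the cofactor $(\star)$ after removing $W=1$, the identification of the excluded values $\theta=1+q/2$ and $\theta=1-q$ with $(\star)$ vanishing at cube roots of unity, and the final passage through Lemma~\ref{z,x} (the hypothesis does give $|q|_p<1$, hence $q\in p\mathbb N$, and $|m|_p>\max\{|\theta-1|_p,|q|_p\}$) are all correct. The genuine gap is at the Hensel step: you assert that ``the three cube roots of unity'' in $\mathbb F_p$ lift to three roots of $(\star)$. But $W^3-1$ has three roots in $\mathbb F_p$ only when $p\equiv1\pmod{3}$; for $p\equiv2\pmod{3}$ cubing is a bijection on $\mathbb F_p^{\times}$, so $W^3-1=(W-1)(W^2+W+1)$ with the quadratic factor irreducible over $\mathbb F_p$, and Hensel (factor) lifting yields exactly one root of $(\star)$ in $\mathbb Z_p$, the other two lying in the unramified quadratic extension. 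Separability of $W^3-1$ for $p>3$ guarantees simple liftable roots, not three of them in $\mathbb F_p$. So your argument establishes the conclusion only under the additional hypothesis $p\equiv1\pmod{3}$, i.e.\ $\sqrt{-3}\in\mathbb Q_p$ --- the same condition that appears as case $(d)$ in the theorem for $\theta=1+q/2$.

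This gap cannot be patched within your framework, and in fact your (correct) bijection on $\mathbb Q_p$-points shows why: since the determinant of $g$ is $(\theta-1)(\theta+q-1)\neq0$ for $\theta\notin\{1,1-q\}$, no roots are lost at poles, so for $p\equiv2\pmod 3$ the equation (\ref{kubik2}) has \emph{exactly one} root in $\mathbb Z_p\setminus\mathbb Z_p^{*}$ under the stated hypotheses (take e.g.\ $p=5$, $m=1$, $q=125$, $\theta=1+5^3$: then $(\star)\equiv W^3-1\pmod 5$ has the single root $W\equiv1$, and $W^2+W+1$ is irreducible mod $5$ since $-3$ is a nonresidue). Thus your machinery actually exposes a missing residue condition in the proposition itself; the paper offers no proof to compare against, saying only that the proof is ``similar to Theorem 4.3 in \cite{MFHA}'', and that cited argument works directly with the depressed cubic, where after scaling $x=\frac{\theta-1}{m}\,y$ the reduction is $y^3-3y-2=(y+1)^2(y-2)$ --- a double root at $y=-1$ whose splitting in $\mathbb Q_p$ again hinges on $\sqrt{-3}$, consistent with your route. (A minor further point: the displayed hypothesis does not exclude $\theta=1$, where $g$ degenerates and (\ref{kubik2}) collapses to $x^3=0$; your bijection tacitly assumes $\theta\neq1$.) With the hypothesis $p\equiv1\pmod 3$ added, your M\"obius-substitution proof is clean and complete.
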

Proof is similar to the proof of Theorem 4.3
in \cite{MFHA}. By Lemma \ref{z,x} and
Proposition \ref{thetay1-q}
we get the following

\begin{thm} Let $p>3$ and $\theta\notin\{1-q, 1+q/2\}$. 
If there exists $m=1,2,\dots,[q/2]$ such that  
$\max\left\{\left|\frac{\theta-1}{m}\right|_p,\left|\frac{q}{m}\right|_p\right\}\leq p^{-3}$ 
then there exist at last 
$1+3{q\choose m}$ TIpGMs for the $p$-adic Potts model on a Cayley tree of order three.
\end{thm}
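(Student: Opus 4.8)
The plan is to read off the conclusion by combining Proposition~\ref{thetay1-q}, Lemma~\ref{z,x} and Corollary~\ref{cor1}, once we verify that the hypotheses of these auxiliary results are in force and that the three nontrivial roots really yield distinct measures.

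First I would dispose of the trivial measure. The constant vector $z=(1,\dots,1)$ always solves (\ref{pt1}) and lies in $\mathcal E_p^{q-1}$, so it produces one TIpGM (namely $\mu_0$ of Theorem~\ref{qpn}); this is the summand $1$ in the count. Next I would check the norm hypotheses of the lemma. The assumption $\max\{|(\theta-1)/m|_p,\,|q/m|_p\}\le p^{-3}$ gives both $|\theta-1|_p<|m|_p$ and $|q|_p<|m|_p$, i.e. $|m|_p>\max\{|\theta-1|_p,|q|_p\}$, which is exactly the standing hypothesis of Lemma~\ref{z,x}. Moreover, since $|m|_p\le1$ for the integer $m$, the inequality $|q/m|_p\le p^{-3}$ forces $|q|_p\le p^{-3}$, hence $p\mid q$ and $q\in p\mathbb N$; thus all hypotheses of Lemma~\ref{z,x} hold.

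Then I would transport the roots into $\mathcal E_p$ and count. By Proposition~\ref{thetay1-q} the depressed cubic (\ref{kubik2}) has three solutions $x\in\mathbb Z_p\setminus\mathbb Z_p^*$, and by Lemma~\ref{z,x} each $z=x+A/3$ is a solution of (\ref{kubik}) lying in $\mathcal E_p$. Because $\theta\notin\{1-q,\,1+q/2\}$, the value $z=1$ is not a root of (\ref{kubik}) (as recorded before Proposition~\ref{theta=1-q}), so all three of these solutions are genuinely different from the trivial one; each is therefore a nontrivial solution $z^*\in\mathcal E_p$ of the fixed-$m$ equation (\ref{rm}). By Corollary~\ref{cor1}, for the fixed $m\le[q/2]$ every distinct solution of (\ref{rm}) in $\mathcal E_p$ generates ${q\choose m}$ TIpGMs, and distinct values $z^*=\exp_p(h^*)$ give distinct measures, disjoint from the trivial one. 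Hence the three roots contribute $3{q\choose m}$ measures, and the total is at least $1+3{q\choose m}$.

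The main obstacle is the distinctness bookkeeping: one must be sure that the three solutions supplied by Proposition~\ref{thetay1-q} are pairwise distinct, so that they yield $3{q\choose m}$ rather than fewer measures. Pairwise distinctness of the roots is part of the cited statement, resting on the nonvanishing of the cubic's discriminant, which for $p>3$ is controlled by the size estimates $|\alpha|_p=(|\theta-1|_p/|m|_p)^2$ and $|\beta|_p=(|\theta-1|_p/|m|_p)^3$ recorded before Lemma~\ref{z,x}; distinctness of the $z^*$ is then preserved by $\exp_p$ on $\mathcal E_p$, so the count $1+3{q\choose m}$ is exact and the stated lower bound follows.
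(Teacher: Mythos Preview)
Your argument is correct and follows the same route as the paper, which simply records ``By Lemma~\ref{z,x} and Proposition~\ref{thetay1-q} we get the following'' before stating the theorem. You supply the bookkeeping the paper omits: verifying $|m|_p>\max\{|\theta-1|_p,|q|_p\}$ and $q\in p\mathbb N$ so that Lemma~\ref{z,x} applies, checking $z=1$ is excluded because $\theta\notin\{1-q,1+q/2\}$, and invoking Corollary~\ref{cor1} (valid since $p>3$ gives $(k^2-k)/2=3\not\equiv0\pmod p$) for the factor ${q\choose m}$.
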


\section{boundedness of translation-invariant $p$-adic gibbs measures}
Now we shall study the problem of boundedness of translation-invariant $p$-adic Gibbs
measures. Note that if $q\notin p\mathbb N$ then by Theorem \ref{qpn}
there exists only one translation-invariant $p$-adic Gibbs measure $\mu_0$. In \cite{MRasos}
 it have been proven that $p$-adic Gibbs measure $\mu_0$ is bounded if and only if
 $q\notin p\mathbb N$. Assume that $m\in\{1,2,...,[q/2]\}$ and
 $z(m)\in\mathcal E_p\setminus{1}$ is a solution to the equation (\ref{rm}).
 We shall show that corresponding $p$-adic Gibbs measure $\mu_{h(m)}$ is
 not bounded. Since
$$\left|\mu_{h(m)}^{(n)}(\sigma)\right|_p=\frac{\left|\exp_p\left(H_n(\sigma)
+\sum_{x\in W_n}h(m){\bf1}(\sigma(x)\leq m)\right)\right|_p}{\left|Z_{n,h(m)}\right|_p}
=\frac{1}{\left|Z_{n,h(m)}\right|_p}$$
We shall show that
$$\left|Z_{n,h(m)}\right|_p\to 0,\qquad n\to\infty.$$
For the normalizing constant we have the following recurrence formula \cite{MRasos}
\begin{equation}\label{rec}
Z_{n+1,h}=A_{n,h}Z_{n,h},\qquad\mbox{where}\ \ A_{n,h}=\prod_{x\in W_n}a_h(x).
\end{equation}
For the solution $z(m)\in\mathcal E_p\setminus\{1\}$ to the equation (\ref{rm}) we have
$$a_{h(m)}(x)=(m(z(m)-1)+q+\theta-1)^k,\qquad\mbox{where}\ \ z(m)=\exp_p(h(m)).$$
Then by (\ref{rec}) we get
$$Z_{n,h(m)}=(m(z(m)-1)+q+\theta-1)^{k|V_{n-1}|}.$$
From this considering $|z(m)-1|_p<1,\ |\theta-1|_p<1$ and $q\in p\mathbb N$ we have
$$\left|Z_{n,h(m)}\right|_p<p^{-k|V_{n-1}|}.$$
Hence,
$$\left|Z_{n,h(m)}\right|_p\to 0,\qquad n\to\infty.$$
Thus, we have proved the following
\begin{thm} A translation-invariant $p$-adic Gibbs measure for the Potts model on the
Cayley tree of order $k$ is bounded if and only if $q\notin
p\mathbb N$.
\end{thm}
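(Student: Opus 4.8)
The plan is to split the argument along the dichotomy $q\notin p\mathbb N$ versus $q\in p\mathbb N$. In the first case I would simply invoke Theorem~\ref{qpn}: when $q\notin p\mathbb N$ there is a unique TIpGM $\mu_0$, and the boundedness of this measure in exactly this regime is the statement already established in \cite{MRasos}. Thus the whole content of the theorem lies in the converse: showing that when $q\in p\mathbb N$, every TIpGM is unbounded.

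For this main direction I would first reduce the $p$-adic norm of a finite-volume marginal to that of the partition function. By Theorem~\ref{tti} and Corollary~\ref{cor1} every TIpGM corresponds to a solution of (\ref{rm}) of the form (\ref{z_i=z}), with some $m$ coordinates equal to $z(m)=\exp_p(h(m))\in\mathcal E_p$ and the rest equal to $1$, so that the marginal has the shape (\ref{p*}). Since the image of $\exp_p$ lies in $\mathcal E_p$, the exponential in the numerator has $p$-adic norm $1$; hence $|\mu_{h(m)}^{(n)}(\sigma)|_p=1/|Z_{n,h(m)}|_p$, and the task is reduced to estimating $|Z_{n,h(m)}|_p$.

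Next I would use the recurrence (\ref{rec}), $Z_{n+1,h}=A_{n,h}Z_{n,h}$ with $A_{n,h}=\prod_{x\in W_n}a_h(x)$. For a translation-invariant solution the per-site factor is constant, equal to $a_{h(m)}(x)=\bigl(m(z(m)-1)+q+\theta-1\bigr)^k$, whence $Z_{n,h(m)}=\bigl(m(z(m)-1)+q+\theta-1\bigr)^{k|V_{n-1}|}$. Under the hypothesis $q\in p\mathbb N$ the base has norm strictly below $1$: since $m$ is an integer and $z(m)\in\mathcal E_p$ we have $|m(z(m)-1)|_p\le|z(m)-1|_p<1$, while $|q|_p<1$ and $|\theta-1|_p<1$ (the latter because $\theta=\exp_p(J)\in\mathcal E_p$), so the strong triangle inequality forces $|m(z(m)-1)+q+\theta-1|_p<1$. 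Therefore $|Z_{n,h(m)}|_p\le p^{-k|V_{n-1}|}\to0$ as $n\to\infty$, and consequently $|\mu_{h(m)}^{(n)}(\sigma)|_p\to\infty$, so the measure is unbounded.

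I expect that the only genuinely computational step is the identification of the per-site activity $a_{h(m)}(x)$ from the recurrence (\ref{rec}) for the translation-invariant solution; everything else is a direct application of the ultrametric inequality. Two points deserve a little care: one must check that the argument of $\exp_p$ in the numerator lies in its domain of convergence, so that its norm is indeed exactly $1$ (which follows from $J\in B(0,p^{-1/(p-1)})$ and from each $h(m)$ being $\log_p$ of an element of $\mathcal E_p$), and one must observe that the trivial solution $z=(1,\dots,1)$, i.e. the case $m=0$, is covered by the very same per-site formula, so that no TIpGM escapes the estimate.
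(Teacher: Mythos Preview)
Your argument follows the paper's proof essentially line for line: reduce $|\mu^{(n)}(\sigma)|_p$ to $1/|Z_n|_p$ via the unit norm of $\exp_p$, use the recurrence (\ref{rec}) to write $Z_{n,h(m)}=(m(z(m)-1)+q+\theta-1)^{k|V_{n-1}|}$, and then apply the strong triangle inequality together with $|z(m)-1|_p,\ |\theta-1|_p,\ |q|_p<1$ to force $|Z_n|_p\to 0$.

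One step is superfluous and actually costs you generality. You invoke Theorem~\ref{tti} and Corollary~\ref{cor1} to reduce every TIpGM to a solution of the form (\ref{z_i=z}); but Theorem~\ref{tti} carries the extra hypothesis $(k^2-k)/2\not\equiv 0(\operatorname{mod}\ p)$, which the statement you are proving does not assume, and the Remark following Theorem~\ref{tti} shows the reduction genuinely fails for $k=p=3$. The paper does not make this appeal. Your own estimate, however, needs no such reduction: for an arbitrary translation-invariant $z=(z_1,\dots,z_{q-1})\in\mathcal E_p^{q-1}$ the per-site factor is
\[
\Bigl(\theta+\sum_{j=1}^{q-1}z_j\Bigr)^k=\Bigl(\sum_{j=1}^{q-1}(z_j-1)+q+\theta-1\Bigr)^k,
\]
and the very same ultrametric bound gives $|Z_n|_p\le p^{-k|V_{n-1}|}\to 0$. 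Drop the reference to Theorem~\ref{tti} and your proof is complete and coincides with the paper's.
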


\end{document}